\newcommand{\ph}[2]{{\left({#1}\right)}_{#2}}
\renewcommand*{\bar}{\overline}
\newcommand{\gfp}[1]{\Gamma_p{\left({#1}\right)}}
\newcommand{\biggfp}[1]{\Gamma_p{\bigl({#1}\bigr)}}
\newcommand{\Biggfp}[1]{\Gamma_p{\Bigl({#1}\Bigr)}}
\theoremstyle{plain}
\newtheorem{theorem}{Theorem}[section]
\newtheorem{lemma}[theorem]{Lemma}
\theoremstyle{definition}
\newtheorem{defi}[theorem]{Definition}
\numberwithin{equation}{section}
\def\imod#1{\allowbreak\mkern5mu({\operator@font mod}\,\,#1)}
\begin{document}

\title[Hypergeometric type identities in the $p$-adic setting and modular forms]{Hypergeometric type identities in the $p$-adic setting and modular forms}

\author{Jenny G. Fuselier}
\address{Jenny G. Fuselier, Department of Mathematics \& Computer Science\\
Drawer 31, High Point University\\
High Point, NC 27262 \\
USA}
\email{jfuselie@highpoint.edu}

\author{Dermot M\lowercase{c}Carthy}
\address{Dermot M\lowercase{c}Carthy, Department of Mathematics \& Statistics\\
Texas Tech University\\
Lubbock, TX 79410-1042\\
USA}
\email{dermot.mccarthy@ttu.edu}

%\url{http://maths.ucd.ie/$\sim$mccarthy}

\subjclass[2010]{Primary: 11F33, 33C20; Secondary: 11S80, 33E50}

%\date{July 20, 2009}

\begin{abstract}
We prove hypergeometric type identities for a function defined in terms of quotients of the $p$-adic gamma function. We use these identities to prove a supercongruence conjecture of Rodriguez-Villegas between a truncated $_4F_3$ hypergeometric series and the Fourier coefficients of certain weight four modular form.  
\end{abstract}

\maketitle

%%%%%%%%%%%%%%%%%%%%%%%%%%%%%%%%%%%%%%%%%%%%%%%%%%
%%%%%%%%%%%%%%%%%%%%%%%%%%%%%%%%%%%%%%%%%%%%%%%%%%
%%%%%%%%%%%%%%%%%%%%%%%%%%%%%%%%%%%%%%%%%%%%%%%%%%
%%%%%%%%%%%%%%%%%%%%%%%%%%%%%%%%%%%%%%%%%%%%%%%%%%
%%%%%%%%%%%%%%%%%%%%%%%%%%%%%%%%%%%%%%%%%%%%%%%%%%
%%%%%%%%%%%%%%%%%%%%%%%%%%%%%%%%%%%%%%%%%%%%%%%%%%

\section{Introduction}\label{sec_Intro}

Hypergeometric functions over finite fields were first introduced by Greene \cite{G}. Functions of this type were also defined by Katz \cite{K}, about the same time, and more recently by the second author \cite{McC6}. These functions are analogues of classical hypergeometric functions and were first developed to simplify character sum evaluations. Since then they have been applied to a number of areas of mathematics but the two areas of most interest to the authors are their transformation properties \cite{G, McC6}, which often mirror classical hypergeometric transformations, and their connections to modular forms \cite{AO, E, F, F2, F3, FOP, L, L3, M, McC5, McC7}. 

The arguments in finite field hypergeometric functions are multiplicative characters, and, consequently, many of the results involving these functions are restricted to primes in certain congruence classes (see for example \cite{E, F, F2, L, L2, L3, M, V}) to facilitate the existence of characters of specific orders.

To overcome these restrictions, the second author defined the following function \cite{McC4, McC8} which extends finite field hypergeometric functions to the $p$-adic setting, and can often allow results involving finite field hypergeometric functions to be extended to a wider class of primes \cite{McC4, McC5, McC8}. 
Let $\mathbb{F}_{p}$ denote the finite field with $p$, a prime, elements. Let $\gfp{\cdot}$ denote Morita's $p$-adic gamma function and let $\omega$ denote the Teichm\"{u}ller character of $\mathbb{F}_p$ with $\bar{\omega}$ denoting its character inverse. 
For $x \in \mathbb{Q}$ we let  $\left\lfloor x \right\rfloor$ denote the greatest integer less than or equal to $x$ and
$\langle x \rangle$ denote the fractional part of $x$, i.e. $x- \left\lfloor x \right\rfloor$.
\begin{defi}\cite[Definition 1.1]{McC8}\label{def_Gp}
Let $p$ be an odd prime and let $s \in \mathbb{F}_p$. For $n \in \mathbb{Z}^{+}$ and $1 \leq i \leq n$, let $a_i, b_i \in \mathbb{Q} \cap \mathbb{Z}_p$.
Then we define  
\begin{multline*}%\label{for_GFn}
{_{n}G_{n}}
\biggl[ \begin{array}{cccc} a_1, & a_2, & \dotsc, & a_n \\
 b_1, & b_2, & \dotsc, & b_n \end{array}
\Big| \; s \; \biggr]_p
: = \frac{-1}{p-1}  \sum_{j=0}^{p-2} 
(-1)^{jn}\;
\bar{\omega}^j(s)\\
\times \prod_{i=1}^{n} 
\frac{\biggfp{\langle a_i -\frac{j}{p-1}\rangle}}{\biggfp{\langle a_i \rangle}}
\frac{\biggfp{\langle - b_i +\frac{j}{p-1}\rangle}}{\biggfp{\langle -b_i \rangle}}
%(-p)^{-\lfloor{a_i-\frac{j}{p-1}}\rfloor -\lfloor{-b_i+\frac{j}{p-1}}\rfloor + \lfloor{a_i}\rfloor + \lfloor{-b_i}\rfloor}.
(-p)^{-\lfloor{\langle a_i \rangle -\frac{j}{p-1}}\rfloor -\lfloor{\langle -b_i \rangle +\frac{j}{p-1}}\rfloor}.
\end{multline*}
\end{defi}
\noindent Throughout the paper we will refer to this function as ${_{n}G_{n}}[\cdots]$.
We note that the value of ${_{n}G_{n}}[\cdots]$ depends only on the fractional part of the $a$ and $b$ parameters, and is invariant if we change the order of the parameters.
 %Therefore, we can assume $0 \leq a_i, b_i <1$.

Results involving finite field hypergeometric functions can readily be converted to expressions involving ${_{n}G_{n}}[\cdots]$ using Lemma \ref{lem_G_to_F}, which we will see later. However these new expressions in ${_{n}G_{n}}[\cdots]$ will still only be valid for primes $p$ where the original characters existed over $\mathbb{F}_p$, i.e., for primes in certain congruence classes. It is a non-trivial exercise to then extend these results to almost all primes. 

The purpose of this paper is threefold. Firstly, we establish certain transformations and identities for ${_{n}G_{n}}[\cdots]$ in full generality, i.e., for almost all primes and beyond expressions which can be implied directly by any existing transformations for finite field hypergeometric functions. Secondly, we use these transformations to prove a linear relation between a certain ${_{4}G_{4}}[\cdots]_p$ and the $p$-th Fourier coefficients of a weight four modular form. As a result, we prove a supercongruence conjecture of Rodriguez-Villegas between a truncated $_4F_3$ classical hypergeometric series and the $p$-th Fourier coefficients of the same weight four modular form, modulo $p^3$.

The rest of this paper is organized as follows. In the next section we expand on the discussion above and state our results. In Section \ref{sec_Prelim} we recall some basic properties of multiplicative characters, Gauss sums and the $p$-adic gamma function.
We discuss the role of finite field hypergeometric functions in proving supercongruences, and their relationship to ${_{n}G_{n}}[\cdots]$, in Section \ref{sec_FFHypFn}. The proofs of our main results are contained in Section \ref{sec_Proofs}. Finally, we make some closing remarks in Section \ref{sec_cr}.

%%%%%%%%%%%%%%%%%%%%%%%%%%%%%%%%%%%%%%%%%%%%%%%%%%
%%%%%%%%%%%%%%%%%%%%%%%%%%%%%%%%%%%%%%%%%%%%%%%%%%
%%%%%%%%%%%%%%%%%%%%%%%%%%%%%%%%%%%%%%%%%%%%%%%%%%
%%%%%%%%%%%%%%%%%%%%%%%%%%%%%%%%%%%%%%%%%%%%%%%%%%

\section{Statement of Results}\label{sec_Results}

We first define the truncated generalized hypergeometric series. For a complex number $a$ and a non-negative integer $n$ let $\ph{a}{n}$ denote the rising factorial defined by
\begin{equation*}\label{RisFact}
\ph{a}{0}:=1 \quad \textup{and} \quad \ph{a}{n} := a(a+1)(a+2)\dotsm(a+n-1) \textup{ for } n>0.
\end{equation*}
Then, for complex numbers $a_i$, $b_j$ and $z$, with none of the $b_j$ being negative integers or zero, we define the truncated generalized hypergeometric series
\begin{equation*}
{{_rF_s} \left[ \begin{array}{ccccc} a_1, & a_2, & a_3, & \dotsc, & a_r \vspace{.05in}\\
\phantom{a_1} & b_1, & b_2, & \dotsc, & b_s \end{array}
\Big| \; z \right]}_{m}
:=\sum^{m}_{n=0}
\frac{\ph{a_1}{n} \ph{a_2}{n} \ph{a_3}{n} \dotsm \ph{a_r}{n}}
{\ph{b_1}{n} \ph{b_2}{n} \dotsm \ph{b_s}{n}}
\; \frac{z^n}{{n!}}.
\end{equation*}
When we drop the subscript $m$ we will use the same notation to mean the generalized hypergeometric series, which sums to infinity.

%Let $\mathbb{F}_{p}$ denote the finite field with $p$, a prime, elements. 
In \cite{R} Rodriguez-Villegas examined the relationship between the number of points over $\mathbb{F}_p$ on certain Calabi-Yau manifolds and truncated generalized hypergeometric series which correspond to a particular period of the manifold. In doing so, he identified numerically 14 possible supercongruences between truncated $_4F_3$ hypergeometric series and Fourier coefficients of certain weight four modular forms, modulo $p^3$. To date, two of these conjectures have been proven \cite{Ki,McC5}.

The first one of these supercongruence conjectures proven, by Kilbourn \cite{Ki}, can be described as follows. Let 
\begin{equation*}\label{def_f1}
f_1(z):= q \prod_{n=1}^{\infty}(1-q^{2n})^4 (1-q^{4n})^4 = \sum_{n=1}^{\infty} a(n)q^n,
\end{equation*}
where $q=e^{2 \pi i z}$, be the unique newform in $S_4(\Gamma_0(8))$, the space of weight four cusp forms on the congruence subgroup $\Gamma_0(8)$. Then, for all odd primes $p$,
\begin{equation}\label{thm_Kilbourn}
{_{4}F_3} \Biggl[ \begin{array}{cccc} \frac{1}{2}, & \frac{1}{2}, & \frac{1}{2}, & \frac{1}{2}\vspace{.05in}\\
\phantom{\frac{1}{2}} & 1, & 1, & 1 \end{array}
\bigg| \; 1 \Biggr]_{p-1}
\equiv
a(p)
\pmod{p^3}.
\end{equation}

\noindent Kilbourn's proof of (\ref{thm_Kilbourn}) can be interpreted in terms of finite field hypergeometric functions. However, by definition, results involving finite field hypergeometric functions will often be restricted to primes in certain congruence classes (see for example \cite{E, F, F2, L, L2, L3, M, V}). Consequently, based on the parameters involved, it appears that these functions cannot be used to fully establish the other 13 supercongruence conjectures of Rodriguez-Villegas. (Please see Section \ref{sec_FFHypFn} for a detailed discussion of finite field hypergeometric functions and their restrictions.) To overcome these restrictions, the second author defined ${_{n}G_{n}}[\cdots]$ (see Definition \ref{def_Gp}), which extends finite field hypergeometric functions to the $p$-adic setting, and can often allow results involving finite field hypergeometric functions to be extended to a wider class of primes \cite{McC4, McC5, McC8}.

Families of congruences between ${_{n}G_{n}}[\cdots]$ and truncated generalized hypergeometric series have been established in \cite{McC4} and these congruences cover all 14 series listed in Rodriguez-Villegas' conjectures. Therefore, to establish the remaining conjectures, it suffices to link certain ${_{n}G_{n}}[\cdots]$ to the Fourier coefficients of the relevant modular forms.
Using this method, a second of Rodriguez-Villegas' supercongruence conjectures was proved by the second author in \cite{McC5}. (Please see Section \ref{sec_FFHypFn} and (\ref{thm_DMCsuper2}) for further details.) 
One such family of congruences is as follows.
\begin{theorem}\cite[Theorem 2.6]{McC4}\label{thm_4G1}
Let $d_1, d_2 \geq 2$ be integers and let $p$ be an odd prime such that $p\equiv \pm1 \imod {d_1}$ and $p\equiv \pm1 \imod {d_2}$. If $s(p):=\biggfp{\frac{1}{d_1}}\biggfp{\frac{d_1-1}{d_1}}\biggfp{\frac{1}{d_2}}\biggfp{\frac{d_2-1}{d_2}}=(-1)^{\left\lfloor \frac{p-1}{d_1} \right\rfloor+\left\lfloor \frac{p-1}{d_2} \right\rfloor}$, then
\begin{multline*}
%{_{4}G} \left(\tfrac{1}{d_1} , 1-\tfrac{1}{d_1}, \tfrac{1}{d_2} , 1-\tfrac{1}{d_2}\right)_p
{_{4}G_{4}}
\biggl[ \begin{array}{cccc} \frac{1}{d_1} & 1-\frac{1}{d_1} & \frac{1}{d_2} & 1-\frac{1}{d_2} \\
 1 & 1 & 1 & 1 \end{array}
\Big| \; 1 \; \biggr]_p\\
\equiv
{_{4}F_3} \Biggl[ \begin{array}{cccc} \frac{1}{d_1}, & 1-\frac{1}{d_1}, & \frac{1}{d_2}, & 1-\frac{1}{d_2}\\
\phantom{\frac{1}{d_1}} & 1, & 1, & 1 \end{array}
\bigg| \; 1 \Biggr]_{p-1}
+s(p)\hspace{1pt}p
\pmod {p^3}.
\end{multline*}
\end{theorem}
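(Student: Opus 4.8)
The plan is to work directly from Definition~\ref{def_Gp}. First I would specialize it to the case at hand: since $s=1$ we have $\bar\omega^j(1)=1$, and since every lower parameter is $b_i=1$ we get $\langle -b_i\rangle = 0$, $\Gamma_p(0)=1$, $\langle -b_i+\tfrac{j}{p-1}\rangle = \tfrac{j}{p-1}$ and $\lfloor \langle -b_i\rangle + \tfrac{j}{p-1}\rfloor = 0$ for $0\le j\le p-2$. Writing $a_1=\tfrac{1}{d_1}$, $a_2 = 1-\tfrac{1}{d_1}$, $a_3=\tfrac{1}{d_2}$, $a_4 = 1-\tfrac{1}{d_2}$, the function collapses to
\begin{equation*}
\frac{-1}{p-1}\sum_{j=0}^{p-2}\; \prod_{i=1}^{4} \frac{\Gamma_p\!\bigl(\langle a_i - \tfrac{j}{p-1}\rangle\bigr)\,\Gamma_p\!\bigl(\tfrac{j}{p-1}\bigr)}{\Gamma_p(\langle a_i\rangle)}\;(-p)^{-\lfloor \langle a_i\rangle - \frac{j}{p-1}\rfloor}.
\end{equation*}
Two features of this sum drive the proof. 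First, the constant denominator $\prod_{i=1}^4 \Gamma_p(\langle a_i\rangle)$ is exactly $s(p)$, which by the reflection formula for $\Gamma_p$ is a sign, and the hypothesis identifies this sign with $(-1)^{\lfloor(p-1)/d_1\rfloor+\lfloor(p-1)/d_2\rfloor}$. Second, the congruence conditions $p\equiv\pm1\imod{d_i}$ place each $a_i$ on the lattice $\tfrac{1}{p-1}\mathbb{Z}$ after clearing denominators, so every shifted argument $\langle a_i - \tfrac{j}{p-1}\rangle$ is again of that form and each floor $\lfloor\langle a_i\rangle - \tfrac{j}{p-1}\rfloor$ takes only the values $0$ and $-1$, switching at the single threshold $\tfrac{j}{p-1}=\langle a_i\rangle$.

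Next I would reindex the $j$-sum so that it matches the truncated series term by term. Using the translation relation $\Gamma_p(x+1) = -x\,\Gamma_p(x)$ (together with its degenerate form when $x$ is not a unit) and the reflection formula, each quotient $\Gamma_p(\langle a_i - \tfrac{j}{p-1}\rangle)/\Gamma_p(\langle a_i\rangle)$ telescopes into a finite product that, up to sign and up to the linear factors divisible by $p$, equals the rising factorial $\ph{a_i}{n}$ for an index $n$ determined by $j$; likewise $\Gamma_p(\tfrac{j}{p-1})$ telescopes to furnish the reciprocal factorial $1/\fac{n}$. The powers $(-p)^{-\lfloor \langle a_i\rangle - j/(p-1)\rfloor}$ are precisely calibrated to reinstate the factors shed by $\Gamma_p$ at multiples of $p$, so that the clean leading contribution of the $n$-th reindexed term is $\frac{\ph{1/d_1}{n}\,\ph{1-1/d_1}{n}\,\ph{1/d_2}{n}\,\ph{1-1/d_2}{n}}{(\fac{n})^4}$ --- exactly the summand of ${_4F_3}[\cdots]_{p-1}$. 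As $j$ sweeps its range, $n$ runs over $0,1,\dots,p-1$, recovering the truncated series as the main term.

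The heart of the matter --- and the step I expect to be the main obstacle --- is controlling everything to precision $p^3$. Because $\Gamma_p$ is locally analytic, I would expand each factor $\Gamma_p(\langle a_i - \tfrac{j}{p-1}\rangle)$ about its nearest admissible point to second order, $\Gamma_p(x+tp) = \Gamma_p(x)\bigl(1 + c_1(x)\,tp + c_2(x)\,(tp)^2 + O(p^3)\bigr)$, and track the $O(p)$ and $O(p^2)$ deviations of the true gamma quotients from the naive Pochhammer products used for the main term. These deviations, together with the terms at which a genuine factor of $p$ is shed near the wrap-around threshold of the floors, do not all cancel: summing them over $j$ leaves a single surviving correction of size $p$. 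Showing that this correction is exactly $s(p)\,p$ --- with $s(p)$ reappearing through the denominators $\prod_i\Gamma_p(\langle a_i\rangle)$ identified above --- is where the delicate, case-dependent estimates live. I would organize the argument by isolating the unique index near the threshold that carries a lone factor of $p$, proving that every other error term is $O(p^2)$ or cancels in pairs, and then invoking the hypothesis on $s(p)$ to fix both the admissible primes and the value of the correction. The two residue cases $p\equiv 1$ and $p\equiv -1 \imod{d_i}$ would be treated in parallel, the latter requiring an additional use of the reflection formula to rewrite $\langle 1-\tfrac{1}{d_i}-\tfrac{j}{p-1}\rangle$ in a form compatible with the $p\equiv 1$ bookkeeping.
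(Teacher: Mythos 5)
First, a point of reference: the paper does not prove this statement at all --- Theorem \ref{thm_4G1} is imported verbatim from \cite[Theorem 2.6]{McC4}, so the only proof to compare against is the one in that reference. Your setup is correct as far as it goes: the specialization of Definition \ref{def_Gp} at $s=1$ with unit lower parameters, the observation that each floor $\lfloor\langle a_i\rangle-\tfrac{j}{p-1}\rfloor$ is $0$ or $-1$, and the identification of $\prod_i\gfp{\langle a_i\rangle}$ with the sign $s(p)$ via (\ref{for_pGammaOneMinus}) all match the source, and the general strategy --- convert each $\Gamma_p$-quotient into a Pochhammer symbol and control the deviation $p$-adically --- is indeed the strategy of \cite{McC4}.

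The difficulty is that the core of the theorem is exactly the step you defer, and the heuristics you offer for it are not right. (i) There is no ``unique index near the threshold that carries a lone factor of $p$'': every $j$ lying strictly between the smallest and the second-smallest of the four thresholds $(p-1)\langle a_i\rangle$ contributes exactly one factor of $-p$, so an entire range of indices (of length on the order of $p$) carries a lone factor of $p$. Moreover, already the $j=0$ term of the $G$-sum equals $\tfrac{-1}{p-1}=1+p+p^2+\cdots$, an excess of $p+p^2$ over the $n=0$ term of the truncated series; the correction $s(p)\hspace{1pt}p$ emerges only after summing first- and second-order deviations over the whole range of $j$, not from one exceptional term. (ii) Executing ``expand $\Gamma_p$ to second order'' requires writing $-\tfrac{j}{p-1}=j+jp+jp^2+\cdots$, Taylor-expanding $\Gamma_p$ at integer points via its first and second logarithmic derivatives, and then invoking Wolstenholme-type congruences for the resulting harmonic sums $\sum 1/i$ and $\sum 1/i^2$ to kill the error terms modulo $p^3$; none of this appears in your sketch, and it is precisely the technical content of the proof in \cite{McC4}. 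Two further slips: $j$ ranges over $p-1$ values while $n$ ranges over $p$ values, so the claimed term-by-term match needs the $n=p-1$ term of the series shown separately to vanish modulo $p^3$; and for $p\equiv-1\imod{d_i}$ the parameter $\tfrac{1}{d_i}$ is not of the form $\tfrac{k}{p-1}$, so the assertion that the congruence conditions place the $a_i$ on the lattice $\tfrac{1}{p-1}\mathbb{Z}$ is inaccurate. As written, the proposal is a reasonable road map whose destination is asserted rather than reached.
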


The purpose of this paper is to establish certain identities for ${_{n}G_{n}}[\cdots]$ which then can be used to establish another one of Rodriguez-Villegas' supercongruence conjectures, as follows. 
\begin{theorem}\label{thm_RV3}
Let $f_2 \in S_4(\Gamma_0(16))$ be the unique weight 4 newform of level 16, with Fourier expansion $ f_2(z):= \sum_{n=1}^{\infty} c(n)q^n$, where $q=e^{2 \pi i z}$. Then, for all odd primes $p$,
\begin{equation*}
{_{4}F_3} \Biggl[ \begin{array}{cccc} \frac{1}{2}, & \frac{1}{2}, & \frac{1}{4}, & \frac{3}{4}\vspace{.05in}\\
\phantom{\frac{1}{2}} & 1, & 1, & 1 \end{array}
\bigg| \; 1 \Biggr]_{p-1}
\equiv
c(p)
\pmod{p^3}.
\end{equation*}
\end{theorem}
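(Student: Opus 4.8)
The plan is to prove Theorem \ref{thm_RV3} by the same two-stage strategy that established the earlier cases \cite{Ki,McC5}: first reduce the truncated ${_4F_3}$ to the $p$-adic function ${_{4}G_{4}}[\cdots]_p$ modulo $p^3$ using the congruences of \cite{McC4}, and then prove a relation between ${_{4}G_{4}}[\cdots]_p$ and the Fourier coefficient $c(p)$ that holds for \emph{every} odd prime. The first step is routine given \cite{McC4}; the real work, and the reason the new identities of this paper are needed, lies in the second step, because the finite field characters that would normally evaluate such a function exist only for half of the primes.

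For the reduction, I would apply Theorem \ref{thm_4G1} with $d_1=2$ and $d_2=4$. Then $\frac{1}{d_1}=1-\frac{1}{d_1}=\frac12$ and $\frac{1}{d_2}=\frac14$, $1-\frac{1}{d_2}=\frac34$, so the upper parameters agree with those of Theorem \ref{thm_RV3}, and the hypotheses $p\equiv\pm1\imod2$, $p\equiv\pm1\imod4$ hold for every odd prime. A short computation with the reflection formula $\gfp{x}\gfp{1-x}=(-1)^{x_0}$ (where $1\le x_0\le p$, $x_0\equiv x\imod p$) shows that the sign condition on $s(p)$ required by Theorem \ref{thm_4G1} also holds for all odd $p$ in this case. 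We thus obtain
\begin{equation*}
{_{4}F_3}\Biggl[\begin{array}{cccc}\frac12,&\frac12,&\frac14,&\frac34\\ \phantom{\frac12}&1,&1,&1\end{array}\Big|\;1\Biggr]_{p-1}\equiv{_{4}G_{4}}\biggl[\begin{array}{cccc}\frac12&\frac12&\frac14&\frac34\\ 1&1&1&1\end{array}\Big|\;1\biggr]_p-s(p)\,p\pmod{p^3},
\end{equation*}
which reduces Theorem \ref{thm_RV3} to proving the evaluation
\begin{equation*}
{_{4}G_{4}}\biggl[\begin{array}{cccc}\frac12&\frac12&\frac14&\frac34\\ 1&1&1&1\end{array}\Big|\;1\biggr]_p\equiv c(p)+s(p)\,p\pmod{p^3}
\end{equation*}
for all odd $p$ (in fact I expect this to hold as an exact identity).

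The second stage I would split according to whether $p\equiv1$ or $p\equiv3\imod4$, and this is where I expect the principal difficulty. The order-$4$ parameters $\frac14,\frac34$ indicate that $f_2$ has complex multiplication by $\mathbb{Q}(i)$; in particular $c(p)=0$ for every $p\equiv3\imod4$, while for $p\equiv1\imod4$, writing $p=\pi\bar{\pi}$ with $\pi\in\mathbb{Z}[i]$ suitably normalized, $c(p)=\pi^3+\bar{\pi}^3$. When $p\equiv1\imod4$ a character of order $4$ exists over $\mathbb{F}_p$, so via Lemma \ref{lem_G_to_F} the function ${_{4}G_{4}}[\cdots]_p$ coincides with a finite field hypergeometric function; evaluating the resulting Jacobi sums of order $4$ and matching them to $\pi^3+\bar{\pi}^3$ then yields the required relation on this half of the primes. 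The hard case is $p\equiv3\imod4$: no order-$4$ character exists, the finite field machinery is unavailable, and the target collapses to the vanishing statement ${_{4}G_{4}}[\cdots]_p\equiv s(p)\,p\pmod{p^3}$. Here I would invoke the transformation identities for ${_{n}G_{n}}[\cdots]$ proved earlier in the paper; since these are derived directly from Definition \ref{def_Gp} by Gauss sum and $p$-adic gamma manipulations, they are valid for almost all primes regardless of congruence class, and I would use them to transform the order-$4$ ${_4G_4}$ into an expression that is manifestly $\equiv s(p)\,p$ once $p\equiv3\imod4$.

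The technical heart, and the step I expect to fight hardest with, is keeping exact control of the floor-function exponents $-\lfloor\langle a_i\rangle-\frac{j}{p-1}\rfloor-\lfloor\langle-b_i\rangle+\frac{j}{p-1}\rfloor$ together with the Teichm\"uller and $p$-adic gamma sign factors in Definition \ref{def_Gp}, so that each transformation reproduces precisely the right power of $p$ and the right overall sign in every residue class. Once the uniform evaluation ${_{4}G_{4}}[\cdots]_p\equiv c(p)+s(p)\,p\pmod{p^3}$ is in hand for both $p\equiv1$ and $p\equiv3\imod4$, substituting it into the first-stage congruence cancels the two copies of $s(p)\,p$ and gives ${_4F_3}[\cdots]_{p-1}\equiv c(p)\pmod{p^3}$, which is Theorem \ref{thm_RV3}.
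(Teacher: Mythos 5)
Your first stage is exactly the paper's: apply Theorem \ref{thm_4G1} with $d_1=2$, $d_2=4$ (the hypotheses hold for every odd prime), reducing the theorem to the exact evaluation ${_{4}G_{4}}\bigl[\begin{smallmatrix}\frac12&\frac12&\frac14&\frac34\\1&1&1&1\end{smallmatrix}\big|1\bigr]_p - s(p)\,p = c(p)$. The problem is your second stage, which rests on a false premise: $f_2$ is \emph{not} a CM form, and $c(p)$ does \emph{not} vanish for $p\equiv 3\pmod 4$. Indeed, $f_2$ is the quadratic twist of the level-$8$ newform $f_1$ by $\chi_{-4}=(\tfrac{-4}{\cdot})$, so $c(p)=\phi_p(-1)\,a(p)$; if $f_2$ had CM by $\mathbb{Q}(i)$ one would need $f_2\otimes\chi_{-4}=f_2$, whereas in fact $f_2\otimes\chi_{-4}=f_1$, which lives at level $8$, not $16$. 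Concretely, $c(3)=-a(3)=4\neq 0$. Consequently the ``hard case'' you set up for $p\equiv 3\pmod 4$ --- proving ${_{4}G_{4}}[\cdots]_p\equiv s(p)\,p$, i.e.\ that the left side vanishes --- is not the true statement and cannot be established by any correct transformation; and the $p\equiv 1\pmod 4$ case, where you propose to match Jacobi sums of order $4$ to $\pi^3+\bar{\pi}^3$, targets a formula for $c(p)$ that does not hold. The appearance of the parameters $\tfrac14,\tfrac34$ signals a twist by the order-$4$ character, not complex multiplication.

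The paper's route avoids any case split. From the twist relation $c(p)=\phi_p(-1)a(p)$ and the Ahlgren--Ono evaluation (via Lemma \ref{lem_G_to_F}) ${_{4}G_{4}}\bigl[\begin{smallmatrix}\frac12&\frac12&\frac12&\frac12\\1&1&1&1\end{smallmatrix}\big|1\bigr]_p-p=a(p)$, the whole theorem reduces to the single uniform identity of Theorem \ref{thm_GMainId},
\begin{equation*}
{_{4}G_{4}}
\biggl[ \begin{array}{cccc} \frac{1}{2} & \frac{1}{2} & \frac{1}{4} & \frac{3}{4} \\
 1 & 1 & 1 & 1 \end{array}
\Big| \; 1 \; \biggr]_p
- s(p)\, p
=\omega^{\frac{p-1}{2}}(-1)
\left[ \,
{_{4}G_{4}}
\biggl[ \begin{array}{cccc} \frac{1}{2} & \frac{1}{2} & \frac{1}{2} & \frac{1}{2} \\
 1 & 1 & 1 & 1 \end{array}
\Big| \; 1 \; \biggr]_p
-p \,
\right],
\end{equation*}
valid for all odd $p$, which is then proved by combining the reduction formula (Theorem \ref{thm_GReduction}), the quadratic transformation (Theorem \ref{thm_3F2QuadTrans}), the inversion (Theorem \ref{thm_GInversion}) and Lemma \ref{Quad_Sum}. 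If you replace your case analysis with this twist-plus-identity argument, the rest of your outline (including the cancellation of the two $s(p)\,p$ terms at the end) goes through.
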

\noindent This congruence was established by Lennon \cite{L} in the case $p \equiv 1 \pmod4$, using finite field hypergeometric functions. 
Appealing to Theorem \ref{thm_4G1} with $d_1=2$, $d_2=4$, it suffices to prove
\begin{theorem}\label{thm_GtoC}
Let $p$ be an odd prime and define $s(p)=\biggfp{\frac{1}{4}}\biggfp{\frac{3}{4}}\biggfp{\frac{1}{2}}^2$. Then
\begin{equation*}
{_{4}G_{4}}
\biggl[ \begin{array}{cccc} \frac{1}{2} & \frac{1}{2} & \frac{1}{4} & \frac{3}{4} \\
 1 & 1 & 1 & 1 \end{array}
\Big| \; 1 \; \biggr]_p
- s(p) \cdot p
=c(p).
\end{equation*}
\end{theorem}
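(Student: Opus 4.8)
The plan is to evaluate the left-hand side exactly and to match it against $c(p)$ case-by-case according to the residue of $p$ modulo $4$. I would first record that $s(p)=\pm1$: by the reflection formula for Morita's $p$-adic gamma function, both $\biggfp{\frac14}\biggfp{\frac34}$ and $\biggfp{\frac12}^2$ are signs, so $s(p)\cdot p$ is a rational integer and Theorem~\ref{thm_GtoC} asserts an exact equality rather than a mere congruence (combining this exact identity with Theorem~\ref{thm_4G1} is precisely what yields the mod-$p^3$ statement of Theorem~\ref{thm_RV3}). I would also use that $f_2$ has complex multiplication by $\mathbb{Q}(i)$, so that $c(p)=0$ whenever $p$ is inert, i.e. whenever $p\equiv3\imod4$.

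For $p\equiv1\imod4$ we have $4\mid p-1$, so powers of the Teichm\"uller character $\omega$ of exact orders $2$ and $4$ exist over $\mathbb{F}_p$. Here I would apply Lemma~\ref{lem_G_to_F} to rewrite ${_4G_4}[\cdots]_p$ as a finite field hypergeometric function in characters of orders $2$ and $4$, the precise object that Lennon \cite{L} evaluated. Using the Gross--Koblitz formula to turn the $p$-adic gamma quotients into Gauss sums, the resulting Jacobi-sum combination computes the CM Hecke-character value giving $c(p)$, and the only remaining task is to account for the elementary $s(p)\cdot p$ correction produced by the degenerate indices $j$ in Definition~\ref{def_Gp} (those at which a floor exponent is nonzero). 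This case therefore reduces to reconciling the normalisation of Lemma~\ref{lem_G_to_F} with Lennon's evaluation.

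The case $p\equiv3\imod4$ is the crux, and it is here that the paper's new identities are indispensable. Since $c(p)=0$, the target collapses to the purely $p$-adic statement ${_4G_4}[\cdots]_p=s(p)\cdot p$. Now $4\nmid p-1$, so there is no character of order $4$ over $\mathbb{F}_p$, finite field hypergeometric functions carrying a $\tfrac14,\tfrac34$ pair are simply unavailable, and Lennon's method cannot be used. Instead I would feed the defining sum of Definition~\ref{def_Gp} into the hypergeometric-type transformations established earlier in the paper, exploiting that the quarter-parameters $\tfrac14,\tfrac34$ never resonate with the shifts $\tfrac{j}{p-1}$ (because $4\nmid p-1$). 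The aim is to pair up and cancel the generic summands, leaving exactly the degenerate index whose floor exponent $-\lfloor\cdots\rfloor$ contributes a single factor of $p$ with gamma-coefficient $s(p)$, thereby proving ${_4G_4}[\cdots]_p=s(p)\cdot p$.

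The main obstacle is twofold and concentrated in the $p\equiv3\imod4$ case. First, one must establish the underlying ${_nG_n}$ identities genuinely in the $p$-adic setting, for almost all primes and beyond anything deducible from a finite field hypergeometric transformation; this is where the hard $p$-adic gamma analysis (reflection, the multiplication formula, and Gross--Koblitz) lives. Second, because Theorem~\ref{thm_GtoC} is an exact identity, one must bookkeep the floor functions $\lfloor\langle a_i\rangle-\tfrac{j}{p-1}\rfloor$ and $\lfloor\langle -b_i\rangle+\tfrac{j}{p-1}\rfloor$ with enough precision to isolate the surviving term and to pin its coefficient to the sign $s(p)$ uniformly in $p$. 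Matching that arithmetic exactly, rather than merely modulo a power of $p$, is the most delicate part of the argument.
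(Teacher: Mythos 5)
Your strategy has a fatal flaw at its crux, the case $p\equiv 3\pmod 4$: the newform $f_2\in S_4(\Gamma_0(16))$ does \emph{not} have complex multiplication by $\mathbb{Q}(i)$, so it is not true that $c(p)=0$ for inert primes. Being the quadratic twist of $f_1$ by the Kronecker symbol $\left(\tfrac{-4}{\cdot}\right)$ is very different from having CM by $\mathbb{Q}(i)$; the latter would require the form to equal its own twist, which is impossible here since $f_1$ and $f_2$ have different levels, and $f_1$ is visibly non-CM ($a(3)=-4\neq 0$ although $3$ is inert in $\mathbb{Q}(i)$). Concretely, $c(3)=\left(\tfrac{-4}{3}\right)a(3)=4\neq 0$. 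Hence for $p\equiv3\pmod 4$ the target is not the ``purely $p$-adic'' statement ${_4G_4}[\cdots]_p=s(p)\cdot p$: the left-hand side must still produce the genuinely arithmetic quantity $c(p)=-a(p)$, and no amount of pairing and cancelling of summands in Definition~\ref{def_Gp} can collapse the sum to a single degenerate term carrying only a factor of $p$. The $p\equiv1\pmod4$ half of your argument (reduction to Lennon's finite field evaluation via Lemma~\ref{lem_G_to_F} and Gross--Koblitz) is plausible, but it is exactly the part that was already known.

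The paper's actual route avoids any case split. It first proves the exact identity of Theorem~\ref{thm_GMainId},
\begin{equation*}
{_{4}G_{4}}
\biggl[ \begin{array}{cccc} \frac{1}{2} & \frac{1}{2} & \frac{1}{4} & \frac{3}{4} \\
 1 & 1 & 1 & 1 \end{array}
\Big| \; 1 \; \biggr]_p - s(p)\cdot p
=\omega^{\frac{p-1}{2}}(-1)\left[\,{_{4}G_{4}}
\biggl[ \begin{array}{cccc} \frac{1}{2} & \frac{1}{2} & \frac{1}{2} & \frac{1}{2} \\
 1 & 1 & 1 & 1 \end{array}
\Big| \; 1 \; \biggr]_p-p\,\right],
\end{equation*}
valid for all odd $p$, by combining the quadratic transformation (Theorem~\ref{thm_3F2QuadTrans}), the reduction formula (Theorem~\ref{thm_GReduction}), the inversion formula (Theorem~\ref{thm_GInversion}) and Lemma~\ref{Quad_Sum}. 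It then invokes the Ahlgren--Ono evaluation (\ref{thm_AO}) together with Lemma~\ref{lem_G_to_F} to identify the bracketed quantity with $a(p)$, and the twist relation $c(p)=\phi_p(-1)a(p)$ (established by a level and dimension argument, not by CM) to conclude. If you want to salvage your outline, the correct replacement for your ``$c(p)=0$'' step is precisely this identity relating the $\{\tfrac12,\tfrac12,\tfrac14,\tfrac34\}$ function to the $\{\tfrac12,\tfrac12,\tfrac12,\tfrac12\}$ one.
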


\noindent Like Lennon, we will rely on the fact that $f_2$ is quadratic twist of $f_1$ with $c_p = \phi_p(-1) a_p$, where $\phi_p(\cdot)$ is the Legendre symbol modulo $p$.  As we will see later, combining aspects of Kilbourn's proof of (\ref{thm_Kilbourn}) (specifically (\ref{thm_AO}))  with properties of ${_{n}G_{n}}[\cdots]$ (Lemma \ref{lem_G_to_F})
we know that, for all odd primes $p$,
\begin{equation*}
{_{4}G_{4}}
\biggl[ \begin{array}{cccc} \frac{1}{2} & \frac{1}{2} & \frac{1}{2} & \frac{1}{2} \\
 1 & 1 & 1 & 1 \end{array}
\Big| \; 1 \; \biggr]_p
-p=a(p).
\end{equation*}
Consequently, the proof of Theorem \ref{thm_GtoC}, and hence Theorem \ref{thm_RV3}, can be reduced to establishing the following identity.
\begin{theorem}\label{thm_GMainId}
Let $p$ be an odd prime and define $s(p):=\biggfp{\frac{1}{4}}\biggfp{\frac{3}{4}}\biggfp{\frac{1}{2}}^2=(-1)^{\left\lfloor \frac{p-1}{4} \right\rfloor+\left\lfloor \frac{p-1}{2} \right\rfloor}$. Then
\begin{equation*}
{_{4}G_{4}}
\biggl[ \begin{array}{cccc} \frac{1}{2} & \frac{1}{2} & \frac{1}{4} & \frac{3}{4} \\
 1 & 1 & 1 & 1 \end{array}
\Big| \; 1 \; \biggr]_p
- s(p) \cdot p
=\omega^{\frac{p-1}{2}}(-1)
\left[ \,
{_{4}G_{4}}
\biggl[ \begin{array}{cccc} \frac{1}{2} & \frac{1}{2} & \frac{1}{2} & \frac{1}{2} \\
 1 & 1 & 1 & 1 \end{array}
\Big| \; 1 \; \biggr]_p
-p \,
\right],
\end{equation*}
where $\omega$ is the Teichm\"{u}ller character.
\end{theorem}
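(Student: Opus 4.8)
The plan is to expand both sides through Definition~\ref{def_Gp} into finite sums over $j\in\{0,\dots,p-2\}$ of products of ratios of $\Gamma_p$-values, and to show the two sums agree. Writing $t_j:=\frac{j}{p-1}$, the summand on the left carries the factor $\Gamma_p(\langle\tfrac14-t_j\rangle)\,\Gamma_p(\langle\tfrac34-t_j\rangle)$ coming from the parameters $\tfrac14,\tfrac34$, together with $\Gamma_p(\langle\tfrac12-t_j\rangle)^2$ from the two copies of $\tfrac12$ and $\Gamma_p(t_j)^4$ from the four lower parameters equal to $1$; the summand on the right carries instead $\Gamma_p(\langle\tfrac12-t_j\rangle)^4\,\Gamma_p(t_j)^4$. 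Thus the whole content of the identity is the replacement of the pair $\{\tfrac14,\tfrac34\}$ by $\{\tfrac12,\tfrac12\}$. I emphasize at the outset that a reduction to finite field hypergeometric functions, and hence to Gauss sums over $\mathbb{F}_p$, is unavailable when $p\equiv3\imod 4$, since then no quartic character exists and the arguments $\langle\tfrac14-t_j\rangle$ are not of the admissible form $\frac{\mathrm{integer}}{p-1}$; working directly with $\Gamma_p$ is exactly what makes a uniform statement for all odd $p$ possible.

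The engine of the proof is the $p$-adic Gauss--Legendre duplication formula for Morita's $\Gamma_p$. Applied to $x=\tfrac14-t_j$, so that $x$ and $x+\tfrac12=\tfrac34-t_j$ are precisely the two offending arguments, it rewrites $\Gamma_p(\langle\tfrac14-t_j\rangle)\,\Gamma_p(\langle\tfrac34-t_j\rangle)$ as $\Gamma_p(\langle\tfrac12-\tfrac{2j}{p-1}\rangle)\,\Gamma_p(\tfrac12)$ times an explicit constant consisting of a power of $p$, a sign, and a value of $\omega$ at $2$. The crucial gain is that $\tfrac12-\tfrac{2j}{p-1}=\frac{(p-1)/2-2j}{p-1}$ is again of the admissible form $\frac{\mathrm{integer}}{p-1}$, so the quarter-integer arguments have been eliminated. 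The factor $\Gamma_p(\tfrac12)$ produced here, the reflection formula $\Gamma_p(x)\Gamma_p(1-x)=\pm1$, and the definition $s(p)=\Gamma_p(\tfrac14)\Gamma_p(\tfrac34)\Gamma_p(\tfrac12)^2$ (with its stated evaluation as $(-1)^{\lfloor(p-1)/4\rfloor+\lfloor(p-1)/2\rfloor}$) combine to absorb the denominators $\Gamma_p(\tfrac14)\Gamma_p(\tfrac34)$ sitting in the left-hand summand, while the remaining signs must be shown to assemble, after the rearrangement below, into the single global prefactor $\omega^{\frac{p-1}{2}}(-1)$.

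Next I would carry out the reindexing that matches the two sums. Because the duplicated factor depends on $j$ only through $\tfrac{2j}{p-1}$, its fractional part wraps around at $j=\tfrac{p-1}{2}$: for $0\le j<\tfrac{p-1}{2}$ one has $\langle\tfrac12-\tfrac{2j}{p-1}\rangle=\langle\tfrac12-\tfrac{k}{p-1}\rangle$ with $k=2j$, while for $\tfrac{p-1}{2}\le j\le p-2$ one must subtract an integer, which the functional equation $\Gamma_p(x+1)=-x\,\Gamma_p(x)$ converts into additional explicit $(-p)$-powers and signs. I would split the sum at $\tfrac{p-1}{2}$ accordingly and use the substitution $j\mapsto j+\tfrac{p-1}{2}$ together with the reflection $x\mapsto1-x$ to fold the two halves onto the full range of the right-hand summation index, checking that the per-term constants produced by the duplication formula and by the floor-function exponents in Definition~\ref{def_Gp} collapse to the common prefactor $\omega^{\frac{p-1}{2}}(-1)$, with the character-at-$2$ contributions cancelling entirely. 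Finally, the degenerate indices --- $j=0$, $j=\tfrac{p-1}{2}$, and any $j$ at which one of the fractional-part arguments vanishes so that a $\Gamma_p$ is evaluated at $0$ --- do not obey the generic matching; isolating these is precisely what produces the discrete correction terms $-\,s(p)\,p$ on the left and $-\,p$ inside the bracket on the right.

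The main obstacle is this last reindexing and constant-bookkeeping step. Getting the doubled argument $\tfrac{2j}{p-1}$ to line up against the single argument of the right-hand sum, while simultaneously tracking the fractional parts, the integer floors governing the powers of $(-p)$, and the Teichm\"{u}ller and quadratic-character signs, is delicate; in particular, the fact that all the $j$-dependent signs and the $\omega(2)$-factors cancel to leave exactly one global factor $\omega^{\frac{p-1}{2}}(-1)$ is a nontrivial cancellation that must be verified rather than inferred. I would regard establishing the clean form of the $p$-adic duplication formula, with its precise sign and $p$-power, and then executing this cancellation as the technical heart of the argument; the consistency check $c(p)=\omega^{\frac{p-1}{2}}(-1)\,a(p)$, reflecting that $f_2$ is the quadratic twist of $f_1$, serves as a useful guide to the expected constant.
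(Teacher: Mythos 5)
Your central idea --- the $p$-adic duplication formula (the $m=2$ case of the multiplication formula for $\gfp{\cdot}$) trading the pair $\{\tfrac14,\tfrac34\}$ for a doubled half-integer argument --- is indeed one ingredient of the paper's argument (it appears as (\ref{for_gammap2j}), inside the proof of the quadratic transformation). But your overall strategy of expanding both sides as single sums over $j$ and matching them termwise after a reindexing cannot succeed, and this is a genuine gap rather than a bookkeeping difficulty. After duplication the left-hand summand contains $\gfp{\langle \tfrac12-\tfrac{2j}{p-1}\rangle}\,\gfp{\langle \tfrac12-\tfrac{j}{p-1}\rangle}^2\,\gfp{\tfrac{j}{p-1}}^4$: one gamma factor carries the \emph{doubled} index $2j$ while the others still carry $j$. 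The right-hand summand is $\gfp{\langle \tfrac12-\tfrac{j}{p-1}\rangle}^4\gfp{\tfrac{j}{p-1}}^4$. No bijection of $\{0,\dots,p-2\}$ maps one family of summands to the other: $j\mapsto 2j$ is $2$-to-$1$ modulo $p-1$ (since $p-1$ is even), and it would in any case also double the index in the factors that must remain undoubled. In Gauss-sum language, the obstruction is that $\sum_\chi g(\phi\chi^2)g(\phi\chi)^2g(\bar{\chi})^4\bar{\chi}(4)\chi(-1)$ is not formally equal to $\sum_\chi g(\phi\chi)^4 g(\bar{\chi})^4\chi(-1)$; relating these is exactly the content of a quadratic (Whipple-type) transformation, which does not follow from Hasse--Davenport/duplication alone.

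The paper gets around this by introducing an auxiliary summation variable $t\in\mathbb{F}_p$, so that both sides become \emph{double} sums. Theorem \ref{thm_GReduction} (proved via the Jacobi-sum identity of Lemma \ref{NonQuad_Sum}) strips one $\tfrac12$ parameter off a ${_{4}G_{4}}$ at the cost of summing a ${_{3}G_{3}}$ over arguments $t^{-1}$; Theorem \ref{thm_3F2QuadTrans} --- the $p$-adic analogue of Whipple's quadratic transformation, obtained by extending Greene's finite-field result and only then applying the duplication step you describe --- converts each ${_{3}G_{3}}$ with parameters $\tfrac12,\tfrac12,\tfrac12$ and argument $t^{-1}$ into one with parameters $\tfrac14,\tfrac34,\tfrac12$ and argument $-(1-t)^2/4t$; and Theorem \ref{thm_GInversion} together with Lemma \ref{Quad_Sum} produces the matching double-sum expansion of the other ${_{4}G_{4}}$. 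The correction terms $-s(p)\,p$ and $-p$ arise from the $\delta(1+x)$ boundary term of the quadratic transformation (contributing at $t=-1$) and from the $j=0$ term excluded from Lemma \ref{Quad_Sum}, not from degenerate indices of a single sum. Your remark that Gauss sums are unavailable for $p\equiv3\imod 4$ also overstates the case: the paper uses Gauss sums and Gross--Koblitz throughout; only a \emph{quartic} character is missing, and the duplication/Hasse--Davenport step is precisely what expresses the quarter-integer parameters without one. To complete your proof you would need to supply the quadratic transformation and the reduction and inversion identities (or equivalents), at which point you will have reconstructed the paper's argument.
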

\noindent We will need the following transformations to establish Theorem \ref{thm_GMainId}.
Let $\delta(x)$ equal $1$ if $x=0$, and equal zero otherwise.
\begin{theorem}\label{thm_3F2QuadTrans}
Let $p$ be an odd prime and define $s(p):=\biggfp{\frac{1}{4}}\biggfp{\frac{3}{4}}\biggfp{\frac{1}{2}}^2=(-1)^{\left\lfloor \frac{p-1}{4} \right\rfloor+\left\lfloor \frac{p-1}{2} \right\rfloor}$. For $1 \neq x \in \mathbb{F}_p^{*}$,
\begin{multline*}
{_{3}G_{3}}
\biggl[ \begin{array}{cccc} \frac{1}{2} & \frac{1}{2} & \frac{1}{2} \\
 1 & 1 & 1 \end{array}
\Big| \; x^{-1} \; \biggr]_p \\
=
\delta(1+x)  \cdot p \cdot \omega^{\frac{p-1}{2}}(-1)
+ \, s(p)  \cdot \omega^{\frac{p-1}{2}} \left(2(1-x)\right)  \cdot
{_{3}G_{3}}
\biggl[ \begin{array}{cccc} \frac{1}{4} & \frac{3}{4} & \frac{1}{2} \\
 1 & 1 & 1 \end{array}
\Big| \; -\frac{(1-x)^2}{4x}  \; \biggr]_p,
\end{multline*}
where $\omega$ is the Teichm\"{u}ller character.
\end{theorem}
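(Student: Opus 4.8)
The plan is to prove the transformation by passing from the $p$-adic gamma function description of each ${_{3}G_{3}}[\cdots]$ to an expression in Gauss sums, and then establishing the resulting Gauss-sum identity via a quadratic change of variables whose engine is the Hasse--Davenport product relation (equivalently, the $p$-adic Legendre duplication formula for $\gfp{\cdot}$). First I would use the Gross--Koblitz formula (to be recalled in Section \ref{sec_Prelim}) to rewrite the left-hand side ${_{3}G_{3}}[\tfrac12,\tfrac12,\tfrac12 \,|\, x^{-1}]$ as a normalized character sum over $\mathbb{F}_p^{*}$ involving only the quadratic character $\phi := \omega^{(p-1)/2}$. The point of starting from this side is that such a representation exists for \emph{all} odd $p$, since $\phi$ always exists, whereas the order-four character implicit in the $\tfrac14,\tfrac34$ parameters does not exist over $\mathbb{F}_p$ when $p \equiv 3 \imod 4$.

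The heart of the argument is the quadratic substitution. The governing observation is that $\tfrac14$ and $\tfrac34$ differ by $\tfrac12$, so the product of gamma factors attached to $\tfrac14$ and $\tfrac34$ collapses, under the $p$-adic duplication formula, onto the single parameter $\tfrac12$; at the level of Gauss sums this is exactly the relation $g(\chi)\,g(\chi\phi) = \chi(4)^{-1}\,g(\chi^2)\,g(\phi)$. Concretely, I would expand the $\mathbb{F}_p$-sum representing the left-hand side and carry out the two-to-one change of variable dictated by the map $x \mapsto -\tfrac{(1-x)^2}{4x}$, grouping the two preimages of each point. Matching the fibers term-by-term and applying Gross--Koblitz in reverse recognizes the result, for all odd $p$, as ${_{3}G_{3}}[\tfrac14,\tfrac34,\tfrac12 \,|\, -\tfrac{(1-x)^2}{4x}]$, while the duplication constants, powers of $2$, and Teichm\"uller signs accumulate into the prefactor $s(p)\cdot\omega^{(p-1)/2}(2(1-x))$.

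Two features then require separate care. The degenerate term $\delta(1+x)\cdot p\cdot\omega^{(p-1)/2}(-1)$ arises precisely at $x=-1$, where the transformed argument $-\tfrac{(1-x)^2}{4x}$ equals $1$ and the quadratic substitution ramifies; this boundary contribution must be isolated and evaluated using the elementary Gauss-sum identity $g(\phi)^2 = \phi(-1)\,p$, whose Teichm\"uller form produces exactly the factor $\omega^{(p-1)/2}(-1)$. Moreover, because a character of order four lives over $\mathbb{F}_p$ only for $p \equiv 1 \imod 4$, I would phrase the combination of the $\tfrac14,\tfrac34$ factors directly through the $p$-adic duplication formula rather than through an order-four Gauss sum over $\mathbb{F}_p$. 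For $p\equiv 1 \imod 4$ this step is literally Hasse--Davenport (and could also be extracted from the finite field theory via Lemma \ref{lem_G_to_F}), but for $p\equiv 3 \imod 4$ the $p$-adic duplication formula, valid unconditionally, plays the identical role and is what lets the transformation hold for every odd prime.

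The main obstacle I anticipate is exactly this uniformity in $p$: ensuring that the quadratic-substitution and duplication bookkeeping assembles correctly for all odd primes, not merely those reached by the finite field theory. The difficulty is less conceptual than a matter of precise accounting of the floor-function exponents of $-p$ and the various Teichm\"uller and sign factors appearing in Definition \ref{def_Gp}, so that they combine into $s(p)$ and $\omega^{(p-1)/2}(2(1-x))$ \emph{on the nose}, rather than only up to an ambiguous root of unity. Verifying that the duplication formula applies at the shifted arguments $\langle\tfrac14 - \tfrac{j}{p-1}\rangle$ and $\langle\tfrac34 - \tfrac{j}{p-1}\rangle$ without incurring an unexpected integer-shift discrepancy, and that these corrections match identically across the two congruence classes of $p$, is where the real work lies.
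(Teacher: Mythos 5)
Your overall architecture coincides with the paper's: both arguments reduce the statement to a Gauss-sum identity in which only the quadratic character $\phi$ and a generic $\chi$ appear (so that it makes sense for every odd $p$), pass to $p$-adic gamma functions via Gross--Koblitz, apply the $m=2$ multiplication formula (\ref{for_pGammaMult}) at the shifted arguments to convert $\biggfp{\langle \frac{1}{2}-\frac{2j}{p-1}\rangle}$ into $\biggfp{\langle \frac{1}{4}-\frac{j}{p-1}\rangle}\biggfp{\langle \frac{3}{4}-\frac{j}{p-1}\rangle}$ together with the floor identity that makes the powers of $-p$ match, and isolate the degenerate $x=-1$ term. Your diagnosis that the $p$-adic duplication formula is what stands in for the (possibly nonexistent) quartic character, and that the residual constants assemble into $s(p)\,\omega^{\frac{p-1}{2}}(2(1-x))$, is exactly what happens in the paper's computation.

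The gap is in how you obtain the starting Gauss-sum identity. The paper does not derive the quadratic transformation itself: it quotes Greene's Theorem 4.28 with $A=B=C=\phi$, which already yields, for all odd $p$ and $x\neq 1$, the identity (\ref{for_Q3F2}),
\begin{equation*}
{_{3}F_{2}}\bigl(\phi,\phi,\phi;\varepsilon,\varepsilon \mid x\bigr)_p
= \delta(1+x)\, p\, \phi(-1)
+ \frac{\phi(1-x)}{p-1}\sum_{\chi}\frac{g(\phi\chi^2)\,g(\phi\chi)\,g(\bar{\chi})^3}{g(\phi)^2}\,\chi\Bigl(\frac{x}{(1-x)^2}\Bigr),
\end{equation*}
after which everything is the $p$-adic bookkeeping you describe. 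Your proposal instead asserts that this step follows from ``expanding the $\mathbb{F}_p$-sum and carrying out the two-to-one change of variable dictated by $x\mapsto -\frac{(1-x)^2}{4x}$, matching fibers term-by-term.'' That is not a proof and would not go through as described: the map acts on the fixed argument $x$, not on a summation variable of the character sum defining the left-hand side, and Greene's quadratic transformations for $_3F_2$ are obtained by a substantially more elaborate argument (built up from $_2F_1$ quadratic transformations by convolution), not by a pointwise substitution. You also locate ``the real work'' in the floor-function and sign bookkeeping; in the paper that part is routine case-checking on the ranges of $j$, and the substantive input is precisely the finite-field transformation you gloss over. The gap is repairable --- the needed identity is in the literature and, as you correctly observe, is uniform in $p$ because only $\phi$ and a generic $\chi$ occur in it --- but as written your central step is asserted rather than established.
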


\begin{theorem}\label{thm_GReduction}
Let $p$ be an odd prime and let $s \in \mathbb{F}_p$. For $a_i \in \mathbb{Q} \cap \mathbb{Z}_p$, $1 \leq i \leq n$,
\begin{equation*}
{_{n+1}G_{n+1}}
\biggl[ \begin{array}{ccccc} a_1 & a_2 & \dotsc & a_n & \frac{1}{2} \\
 1 & 1 & \dotsc & 1 & 1 \end{array}
\Big| \; s \; \biggr]_p
=
-
\sum_{t=2}^{p-1}
{_{n}G_{n}}
\biggl[ \begin{array}{cccc} a_1 & a_2 & \dotsc & a_n \\
 1 & 1 & \dotsc & 1 \end{array}
\Big| \; st \; \biggr]_p
\cdot \; \omega^{\frac{p-1}{2}}(1-t),
\end{equation*}
where $\omega$ is the Teichm\"{u}ller character.
\end{theorem}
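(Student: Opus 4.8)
The plan is to prove the identity by expanding both sides through Definition \ref{def_Gp} and comparing the two Teichm\"uller sums index by index. Write $\phi := \omega^{\frac{p-1}{2}}$ for the quadratic character. On the right-hand side I would substitute the definition of ${_{n}G_{n}}[\cdots]$, use $\bar{\omega}^k(st)=\bar{\omega}^k(s)\,\bar{\omega}^k(t)$, and interchange the finite sum over $t$ with the sum over the Teichm\"uller index $k$. After this interchange the only $t$-dependence is collected into the inner character sum
\[
S_k := \sum_{t=2}^{p-1}\bar{\omega}^k(t)\,\phi(1-t).
\]
Since $\phi(1-t)$ vanishes at $t=1$, the range may be extended to $t\in\mathbb{F}_p^{*}$, so $S_k$ is the Jacobi sum $J(\bar{\omega}^k,\phi)=\sum_{t}\bar{\omega}^k(t)\,\phi(1-t)$. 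Matching the prefactors $\tfrac{-1}{p-1}(-1)^{j(n+1)}$ on the left with $\tfrac{1}{p-1}(-1)^{jn}$ on the right, the whole theorem reduces to showing, for each $0\le j\le p-2$, that
\[
S_j = -(-1)^{j}\,
\frac{\gfp{\langle \tfrac12-\tfrac{j}{p-1}\rangle}\,\gfp{\tfrac{j}{p-1}}}{\gfp{\tfrac12}}\,
(-p)^{-\lfloor \tfrac12-\tfrac{j}{p-1}\rfloor},
\]
where the right-hand factor is exactly the contribution of the parameter pair $(\tfrac12,1)$ to the $(n+1)$-fold product in Definition \ref{def_Gp}.

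To evaluate $S_j$ I would pass from Jacobi sums to Gauss sums and then to the $p$-adic gamma function. The two degenerate indices are handled directly: substituting $u=1-t$ gives $S_0=\sum_{u\ne 1}\phi(u)=-\phi(1)=-1$, and $S_{(p-1)/2}=J(\phi,\phi)=-\phi(-1)$. These match the target values $-1$ and $-(-1)^{(p-1)/2}$, the second using $\phi(-1)=(-1)^{(p-1)/2}$, since in both cases $\langle\tfrac12-\tfrac{j}{p-1}\rangle\in\{0,\tfrac12\}$ forces the $(\tfrac12,1)$-factor to collapse to $\gfp{0}=1$. For the generic indices $j\neq 0,\tfrac{p-1}{2}$ I would write $\phi=\bar{\omega}^{(p-1)/2}$ and use the Jacobi--Gauss relation $S_j=g(\bar{\omega}^j)\,g(\phi)/g(\bar{\omega}^{j}\phi)$, then apply the Gross--Koblitz formula recalled in Section \ref{sec_Prelim} to each of the three Gauss sums.

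The comparison then splits according to whether $\tfrac{j}{p-1}\le\tfrac12$ or $>\tfrac12$, because this governs both the floor $\lfloor \tfrac12-\tfrac{j}{p-1}\rfloor$ and the reduction of the exponent $j+\tfrac{p-1}{2}$ modulo $p-1$ inside $g(\bar{\omega}^{j}\phi)$. In the first case the powers of $\pi$ coming from Gross--Koblitz cancel completely, while in the second case the reduction of $j+\tfrac{p-1}{2}$ produces precisely one extra factor $\pi^{p-1}=-p$, which accounts for the $(-p)^{1}$ appearing on the target side. In either case the reflection formula $\gfp{x}\gfp{1-x}=\pm1$ converts the arguments $\tfrac12+\tfrac{j}{p-1}$ (resp.\ $\tfrac{j}{p-1}-\tfrac12$) produced by Gross--Koblitz into the argument $\langle\tfrac12-\tfrac{j}{p-1}\rangle$ demanded by Definition \ref{def_Gp}, moving $\gfp{\tfrac12}$ from numerator to denominator via $\gfp{\tfrac12}^2=\pm1$.

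The hard part will be the sign and power-of-$p$ bookkeeping across this case split: keeping the Gauss-sum normalization, the Gross--Koblitz sign, and the choice of $\pi$ mutually consistent, and verifying that the residual signs from $\gfp{x}\gfp{1-x}=\pm1$ and from $\gfp{\tfrac12}^2$ combine to exactly the factor $-(-1)^{j}$ required above. Everything else is a routine interchange of summation and a standard Jacobi-sum evaluation; it is precisely these $\pm1$ identities that force the two $\Gamma_p$-expressions to coincide term by term, yielding the stated reduction.
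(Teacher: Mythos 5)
Your proposal is correct and follows essentially the same route as the paper: the per-index identity you reduce the theorem to is precisely the paper's Lemma~\ref{NonQuad_Sum} (equivalent to it under the substitution $t \to t^{-1}$, which the paper performs at the end of its proof), and both arguments establish that identity by passing through Jacobi sums, Gauss sums and the Gross--Koblitz formula. The only divergence is internal to the proof of that identity: the paper first rewrites the relevant Jacobi sum as $\phi\chi(-1)J(\phi\chi,\phi) = g(\phi\chi)g(\bar{\chi})/g(\phi)$, a combination which Gross--Koblitz converts directly into the target $\Gamma_p$-expression with the factor $(-p)^{-\lfloor \frac{1}{2}-\frac{j}{p-1}\rfloor}$ emerging automatically, so the case split on $\frac{j}{p-1}\lessgtr\frac{1}{2}$ and the reflection-formula sign bookkeeping that you correctly flag as the hard part of your version are avoided entirely.
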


\begin{theorem}\label{thm_GInversion}
Let $2 \leq n \in \mathbb{Z}$. For $p$ an odd prime and $s \in \mathbb{F}_p^{*}$,
\begin{equation*}
{_{n}G_{n}}
\biggl[ \begin{array}{cccccc} \frac{1}{2} &  \frac{1}{2} & \dotsc &  \frac{1}{2} & \frac{1}{4} & \frac{3}{4} \vspace{.05in}\\
 1 & 1 & \dotsc & 1 & 1 & 1 \end{array}
\Big| \; s \; \biggr]_p
=
p \cdot \omega^{\frac{p-1}{2}}((-1)^{n+1}s) \cdot
{_{n}G_{n}}
\biggl[ \begin{array}{cccccc} \frac{1}{2} &  \frac{1}{2} & \dotsc &  \frac{1}{2} & \frac{1}{2} & \frac{1}{2} \vspace{.05in}\\
 1 & 1 & \dotsc & 1 & \frac{1}{4} & \frac{3}{4} \end{array}
\Big| \; s^{-1} \; \biggr]_p
\end{equation*}
where $\omega$ is the Teichm\"{u}ller character.
\end{theorem}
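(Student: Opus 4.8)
The plan is to prove the identity directly from Definition \ref{def_Gp} by reindexing the defining character sum of the left-hand side, without invoking the $p$-adic multiplication (Gross--Koblitz) formula and using the reflection formula only to evaluate the single scalar $\Gamma_p\bigl(\tfrac12\bigr)^{2}$. Write $u=\frac{j}{p-1}$, and recall that each summand is a product of $n$ gamma quotients times the character $\bar\omega^{j}(s)$, the sign $(-1)^{jn}$, and a power of $-p$ coming from the floor functions. Since $p-1$ is even, the map $j\mapsto \frac{p-1}{2}-j \pmod{p-1}$ is a bijection of the residues $\{0,1,\dotsc,p-2\}$. I would substitute this into the left-hand sum and match the resulting summands term-by-term against the defining sum of the right-hand side evaluated at $s^{-1}$.

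First I would record the elementary effects of the substitution. The character becomes $\bar\omega^{\frac{p-1}{2}-j}(s)=\omega^{\frac{p-1}{2}}(s)\,\bar\omega^{j}(s^{-1})$, which supplies both the inversion $s\mapsto s^{-1}$ and the scalar $\omega^{\frac{p-1}{2}}(s)$, while the sign gives $(-1)^{(\frac{p-1}{2}-j)n}=(-1)^{(p-1)n/2}(-1)^{jn}$. The key structural point is that the substitution sends $u\mapsto \tfrac12-u \pmod 1$, and a short computation shows that after substitution the \emph{numerator} gamma factors of the left-hand side (upper entries $\tfrac12,\dotsc,\tfrac12,\tfrac14,\tfrac34$, lower entries $1$) coincide \emph{exactly} with those of the right-hand side (upper entries all $\tfrac12$, lower entries $1,\dotsc,1,\tfrac14,\tfrac34$); both equal the common product $\Gamma_p(u)^{\,n-2}\,\Gamma_p(\langle u-\tfrac14\rangle)\,\Gamma_p(\langle u+\tfrac14\rangle)\,\Gamma_p(\langle\tfrac12-u\rangle)^{n}$. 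The only mismatch among the gamma contributions is in the \emph{constant} denominators $\prod_i \Gamma_p(\langle a_i\rangle)\Gamma_p(\langle -b_i\rangle)$, whose ratio (right-hand side over left-hand side) is $\Gamma_p\bigl(\tfrac12\bigr)^{2}$.

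The heart of the argument, and the step I expect to be most delicate, is the power of $p$. Because the factor $(-p)^{M_j}$ coming from the floor functions is \emph{not} periodic modulo $p-1$, the reindexing wraps around, and I must compute the exponent difference $M^{L}_{\frac{p-1}{2}-j}-M^{R}_{j}$ at the genuine representative in $\{0,\dotsc,p-2\}$. I would split into the ranges $0\le j\le\frac{p-1}{2}$ and $\frac{p-1}{2}<j\le p-2$, i.e. $u\in[0,\tfrac12]$ versus $u\in(\tfrac12,1)$, so that the reflected index corresponds to $u^{*}=\tfrac12-u$ and $u^{*}=\tfrac32-u$ respectively, and then tabulate the values of $\lfloor\tfrac12-u^{*}\rfloor,\lfloor\tfrac14-u^{*}\rfloor,\lfloor\tfrac34-u^{*}\rfloor$ against $\lfloor\tfrac12-u\rfloor,\lfloor\tfrac14+u\rfloor,\lfloor\tfrac34+u\rfloor$ on each subinterval cut out by $u=\tfrac14,\tfrac34$. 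The outcome of this bookkeeping is that $M^{L}_{\frac{p-1}{2}-j}-M^{R}_{j}=1$ \emph{identically}, so every term acquires a single uniform factor of $-p$; the boundary indices $j=0,\frac{p-1}{2}$ where a fractional part vanishes must be checked by hand, but $\Gamma_p(0)=1$ together with $s\in\mathbb{F}_p^{*}$ ensures the uniform factor persists and that no $\delta$-type correction is needed.

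Finally I would assemble the constants. Collecting $(-1)^{(p-1)n/2}$ from the sign, $\Gamma_p\bigl(\tfrac12\bigr)^{2}=(-1)^{(p+1)/2}$ from the reflection value, and the factor $-p$ from the floor computation, the prefactor $\tfrac{-1}{p-1}$ recombines with the reindexed sum into $\tfrac{-1}{p-1}$ times the right-hand summands, so that the coefficient of $_{n}G_{n}[\,\cdots\,|\,s^{-1}]$ simplifies, using $\omega^{\frac{p-1}{2}}(-1)=(-1)^{(p-1)/2}$, to $p\cdot\omega^{\frac{p-1}{2}}((-1)^{n+1}s)$. This is exactly the claimed identity. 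The main obstacle is therefore entirely the floor bookkeeping of the third paragraph: once one observes that the $j\mapsto\frac{p-1}{2}-j$ substitution matches the two families of gamma numerators on the nose, everything else is a clean reindexing and a scalar calculation.
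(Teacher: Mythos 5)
Your proposal is correct and follows essentially the same route as the paper: the paper's proof is exactly the substitution $j=\frac{p-1}{2}-k$ in the defining sum, matching the gamma numerators term-by-term, and extracting the scalar via $\biggfp{\frac{1}{2}}^2=-\omega^{\frac{p-1}{2}}(-1)$. The one place you overestimate the difficulty is the floor bookkeeping: the combined exponent of $-p$ in each summand is in fact invariant under $j\mapsto j+(p-1)$ (the shifts in the paired floors $-\lfloor \langle a_i\rangle - u\rfloor-\lfloor \langle -b_i\rangle + u\rfloor$ cancel), so no wraparound case analysis is needed and the exponent difference reduces to $\lfloor \tfrac34+u\rfloor-\lfloor -\tfrac14+u\rfloor=1$ identically.
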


Theorem \ref{thm_3F2QuadTrans} is an analogue of the classical hypergeometric quadratic transformation due to Whipple \cite[(3.1.15) page 130]{AAR}
\begin{multline*}
{_3F_2} \left[ \begin{array}{ccc} a & b & c \vspace{.05in}\\
\phantom{a} & 1+a-b & 1+a-c \end{array}
\Big| \; z \right]\\
=
(1-z)^{-a} \,
{_3F_2} \left[ \begin{array}{ccc}  \frac{a}{2} & \frac{a+1}{2} & 1+a-b-c \vspace{.05in}\\
\phantom{a} & 1+a-b & 1+a-c \end{array}
\Big| \; -\frac{4z}{(1-z)^2} \right],
\end{multline*}
when $a=b=c=\frac{1}{2}$.

Many transformations exist for finite field hypergeometric functions which are analogues of classical results \cite{G, McC6}. These transformations can readily be converted to expressions involving ${_{n}G_{n}}[\cdots]$ using Lemma \ref{lem_G_to_F}. However the resulting transformations in ${_{n}G_{n}}[\cdots]$ will only be valid for primes $p$ where the original characters existed over $\mathbb{F}_p$, i.e., for primes in certain congruence classes. It is a non-trivial exercise to then extend these results to almost all primes. There are very few identities and transformations for ${_{n}G_{n}}[\cdots]$ in full generality. Barman and Saikia \cite{BS1, BS2} provide transformations for ${_{2}G_{2}}[\cdots]$, by counting points on various families of elliptic curves. To our knowledge Theorems \ref{thm_GMainId} to \ref{thm_GInversion} are the only other full ${_{n}G_{n}}[\cdots]$ identities.

Theorems \ref{thm_3F2QuadTrans} to \ref{thm_GInversion} are extended versions of special cases of the finite field hypergeometric function results described in Corollary 4.30, Theorem 3.13 and Theorem 4.2(b) respectively in \cite{G}.

%%%%%%%%%%%%%%%%%%%%%%%%%%%%%%%%%%%%%%%%%%%%%%%%%%
%%%%%%%%%%%%%%%%%%%%%%%%%%%%%%%%%%%%%%%%%%%%%%%%%%
%%%%%%%%%%%%%%%%%%%%%%%%%%%%%%%%%%%%%%%%%%%%%%%%%%
%%%%%%%%%%%%%%%%%%%%%%%%%%%%%%%%%%%%%%%%%%%%%%%%%%

\section{Preliminaries}\label{sec_Prelim}

Let $\mathbb{Z}_p$ denote the ring of $p$-adic integers, $\mathbb{Q}_p$ the field of $p$-adic numbers, $\bar{\mathbb{Q}_p}$ the algebraic closure of $\mathbb{Q}_p$, and $\mathbb{C}_p$ the completion of $\bar{\mathbb{Q}_p}$.
\subsection{Multiplicative Characters and Gauss Sums}
Let $\widehat{\mathbb{F}^{*}_{p}}$ denote the group of multiplicative characters of $\mathbb{F}^{*}_{p}$. 
We extend the domain of $\chi \in \widehat{\mathbb{F}^{*}_{p}}$ to $\mathbb{F}_{p}$, by defining $\chi(0):=0$ (including the trivial character $\varepsilon$) and denote $\bar{\chi}$ as the inverse of $\chi$. When $p$ is odd we denote the character of order 2 of $\mathbb{F}_p^*$ by $\phi_p$. We will often drop the subscript $p$ if it is clear from the context.
%We recall the following orthogonal relations. 
%For $\chi \in \widehat{\mathbb{F}_p^{*}}$ we have
%\begin{equation}\label{for_TOrthEl}
%\sum_{x \in \mathbb{F}_p} \chi(x)=
%%\sum_{x \in \mathbb{F}_p} T^n(x) =
%\begin{cases}
%p-1 & \text{if $\chi = \varepsilon$}  ,\\
%0 & \text{if $\chi \neq \varepsilon$}  ,
%\end{cases}
%\end{equation}
%and, for $x \in \mathbb{F}_p$ we have
%\begin{equation}\label{for_TOrthCh}
%\sum_{\chi \in \widehat{\mathbb{F}_p^{*}}} \chi(x)=
%%\sum_{n=0}^{p-2} T^n(x) =
%\begin{cases}
%p-1 & \text{if $x=1$}  ,\\
%0 & \text{if $x \neq 1$}  .
%\end{cases}
%\end{equation}

We now introduce some properties of Gauss sums. For further details see \cite{BEW}, noting that we have adjusted results to take into account $\varepsilon(0)=0$. 
Let $\zeta_p$ be a fixed primitive $p$-th root of unity in $\bar{\mathbb{Q}_p}$. We define the additive character $\theta : \mathbb{F}_p \rightarrow \mathbb{Q}_p(\zeta_p)$ by $\theta(x):=\zeta_p^{x}$. We note that $\mathbb{Q}_p$ contains all $(p-1)$-th roots of unity and in fact they are all in $\mathbb{Z}^{*}_p$. Thus we can consider multiplicative characters of $\mathbb{F}_p^{*}$ to be maps $\chi: \mathbb{F}_p^{*} \to \mathbb{Z}_{p}^{*}$. 
Recall then that for $\chi \in \widehat{\mathbb{F}_p^{*}}$, the Gauss sum $g(\chi)$ is defined by 
%\begin{equation*}
$g(\chi):= \sum_{x \in \mathbb{F}_p} \chi(x) \theta(x).$
%\end{equation*}

The following important result gives a simple expression for the product of two Gauss sums. For $\chi \in \widehat{\mathbb{F}_p^{*}}$ we have
\begin{equation}\label{for_GaussConj}
g(\chi)g(\bar{\chi})=
\begin{cases}
\chi(-1) p & \text{if } \chi \neq \varepsilon,\\
1 & \text{if } \chi= \varepsilon.
\end{cases}
\end{equation}
\noindent Another important product formula for Gauss sums is the Hasse-Davenport formula.
\begin{theorem}[Hasse, Davenport \cite{BEW} Thm 11.3.5]\label{thm_HD}
Let $\psi$ be a character of order $m$ of $\mathbb{F}_p^*$ for some positive integer $m$. For a character $\chi$ of $\mathbb{F}_p^*$ we have
\begin{equation*}
\prod_{i=0}^{m-1} g(\psi^i \chi) = g(\chi^m) \chi^{-m}(m)\prod_{i=1}^{m-1} g(\psi^i).
\end{equation*}
\end{theorem}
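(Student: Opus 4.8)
The plan is to reduce this finite-field identity to a multiplication formula for Morita's $p$-adic gamma function $\Gamma_p$ by means of the Gross--Koblitz formula; this is the natural route here, since $\Gamma_p$ already underlies the definition of ${_{n}G_{n}}[\cdots]$ and the bracket/floor bookkeeping needed is exactly of the kind appearing in Definition \ref{def_Gp}. Since $\psi$ has order $m$ we must have $m \mid p-1$, so I may write $\psi=\bar\omega^{\,(p-1)/m}$ and $\chi=\bar\omega^{\,a}$ for a suitable integer $a$, where $\omega$ is the Teichm\"uller character. The Gross--Koblitz formula then expresses each Gauss sum in the normalized shape $g(\bar\omega^{\,j})=-\pi^{\,j}\,\Gamma_p(\langle j/(p-1)\rangle)$ for $0\le j\le p-2$, where $\pi^{p-1}=-p$.

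First I would apply this to every factor $g(\psi^i\chi)=g(\bar\omega^{\,a+i(p-1)/m})$ on the left-hand side, turning $\prod_{i=0}^{m-1}g(\psi^i\chi)$ into $(-1)^m$ times an explicit power of $\pi$ times $\prod_{i=0}^{m-1}\Gamma_p(\langle \frac{a}{p-1}+\frac{i}{m}\rangle)$. The core step is then the $p$-adic Gauss--Legendre multiplication formula for $\Gamma_p$, an identity of the form $\prod_{i=0}^{m-1}\Gamma_p(\langle x+\tfrac{i}{m}\rangle)=(\text{a unit depending only on }m,p)\cdot\Gamma_p(\langle mx\rangle)\prod_{i=1}^{m-1}\Gamma_p(\langle i/m\rangle)$, which one establishes from Morita's functional equation $\Gamma_p(x+1)=-x\,\Gamma_p(x)$ (valid when $x$ is a unit, with the modified form otherwise) by telescoping/induction on the number of factors, just as in the archimedean case. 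Running Gross--Koblitz backwards then re-assembles $\Gamma_p(\langle mx\rangle)$ into $g(\chi^m)$ and $\prod_{i=1}^{m-1}\Gamma_p(\langle i/m\rangle)$ into $\prod_{i=1}^{m-1}g(\psi^i)$, giving the stated right-hand side.

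The hard part will be the exact bookkeeping that isolates the constant $\chi^{-m}(m)$: after the $\Gamma_p$-factors cancel, the leftover powers of $\pi$ together with the floor terms concealed in the fractional-part brackets must be shown to collapse to precisely $\chi^{-m}(m)$ (together with a sign), which is where the value $\omega(m)$ enters. A secondary subtlety is the parity of $m$, since $\prod_{i=0}^{m-1}\psi^i$ equals $\varepsilon$ when $m$ is odd but the quadratic character $\phi$ when $m$ is even; the reverse Gross--Koblitz step must correctly absorb this quadratic contribution, equivalently the middle factor $g(\phi)$ of $\prod_{i=1}^{m-1}g(\psi^i)$. Finally I would treat the degenerate family $\chi^m=\varepsilon$ (equivalently $\chi\in\langle\psi\rangle$) directly: cyclically relabelling the factors shows the left side equals $\prod_{i=0}^{m-1}g(\psi^i)=-\prod_{i=1}^{m-1}g(\psi^i)$ by $g(\varepsilon)=-1$, matching the right side since there $g(\chi^m)=-1$ and $\chi^{-m}(m)=1$; here \eqref{for_GaussConj}, in the form $g(\psi^i)g(\psi^{m-i})=\psi^i(-1)p$, is also what controls the product $\prod_{i=1}^{m-1}g(\psi^i)$ throughout. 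An alternative, entirely elementary route replaces Gross--Koblitz by iterated use of the Gauss--Jacobi relation $g(\alpha)g(\beta)=J(\alpha,\beta)g(\alpha\beta)$ with $J$ the Jacobi sum, but there the same constant $\chi^{-m}(m)$ and the conversion of $g(\chi^m\psi^{m(m-1)/2})$ back to $g(\chi^m)$ reappear as the principal difficulties.
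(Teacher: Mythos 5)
Two preliminary remarks. First, the paper does not prove Theorem \ref{thm_HD}: it is quoted from \cite{BEW}, Theorem 11.3.5, so there is no in-paper argument to compare yours against; the reference's proof is the elementary Gauss--Jacobi-sum argument that you relegate to your final sentence as an ``alternative route.'' Second, the parts of your outline that you flag as delicate do in fact close. Writing $\chi=\bar{\omega}^{a}$ and $x=\langle \tfrac{ma}{p-1}\rangle=\tfrac{r}{p-1}$, the arguments $\langle\tfrac{a}{p-1}+\tfrac{i}{m}\rangle$ produced on the left by Gross--Koblitz are exactly the numbers $\tfrac{x+h}{m}$, $0\le h\le m-1$, appearing in \eqref{for_pGammaMult}; their sum is $x+\tfrac{m-1}{2}$, which equals the sum of the exponents $\langle\tfrac{ma}{p-1}\rangle$ and $\tfrac{h}{m}$ coming from $g(\chi^m)$ and $g(\psi^h)$ on the right, so the powers of $\pi$ cancel identically and no floor terms survive. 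The unit in \eqref{for_pGammaMult} is $\omega(m)^{(1-x)(1-p)}=\omega(m)^{r}=\omega(m)^{ma}=\chi^{-m}(m)$, and the signs are $(-1)^m$ on both sides. Your worry about the parity of $m$ and the factor $g(\phi)$ is a non-issue on this route (it only arises in the Jacobi-sum route), and your handling of the case $\chi\in\langle\psi\rangle$ is correct, though the uniform computation already covers it.

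The genuine gap is your justification of the key lemma. You claim the $p$-adic multiplication formula $\prod_{i=0}^{m-1}\Gamma_p(\langle x+\tfrac{i}{m}\rangle)=(\text{unit})\cdot\Gamma_p(\langle mx\rangle)\prod_{i=1}^{m-1}\Gamma_p(\tfrac{i}{m})$ follows from $\Gamma_p(x+1)=-x\,\Gamma_p(x)$ ``by telescoping/induction on the number of factors, just as in the archimedean case.'' No such proof exists in the archimedean case: the functional equation determines $\Gamma$ only up to a $1$-periodic factor, and Gauss's multiplication theorem needs Bohr--Mollerup or the limit definition. A direct $p$-adic proof does exist, but it is a different argument: setting $F(x)=\prod_{h}\Gamma_p(\tfrac{x+h}{m})/\Gamma_p(x)$, one finds $F(x+1)/F(x)=m^{-1}$ when $p\nmid x$ and $=1$ when $p\mid x$, so $F$ is not periodic; one must evaluate it along positive integers, track the factor $m^{-1}$ accumulated at each step not divisible by $p$ (this is precisely the origin of $\omega\bigl(m^{(1-x)(1-p)}\bigr)$), and pass to the limit by continuity. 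Moreover, the standard sources for $\Gamma_p$ that the paper relies on (\cite{Ko}, \cite{GK}) commonly \emph{derive} the multiplication formula from the Hasse--Davenport relation via Gross--Koblitz, so invoking \eqref{for_pGammaMult} here without an independent proof makes your argument circular. To repair the proposal, either supply the direct continuity proof of the multiplication formula sketched above, or drop the $p$-adic detour and prove the identity as in \cite{BEW} by induction on the factors using $g(\alpha)g(\beta)=J(\alpha,\beta)g(\alpha\beta)$ together with \eqref{for_GaussConj}, where the constant $\chi^{-m}(m)$ emerges from evaluating the resulting product of Jacobi sums.
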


\noindent Recall also that for $\chi, \psi \in \widehat{\mathbb{F}^{*}_{p}}$ we define the Jacobi sum by $J(\chi,\psi):=\sum_{t \in \mathbb{F}_p} \chi(t) \psi(1-t).$
\noindent We can relate Jacobi sums to Gauss sums.
For $\chi$, $\psi \in \widehat{\mathbb{F}_q^{*}}$ not both trivial,
\begin{equation}\label{for_JactoGauss}
J(\chi, \psi)=
\begin{cases}
\dfrac{g(\chi)g(\psi)}{g(\chi \psi)}
& \qquad \text{if } \chi \psi \neq \varepsilon,\\[18pt]
-\dfrac{g(\chi)g(\psi)}{p}
&\qquad \text{if }\chi \psi = \varepsilon \: .
\end{cases}
\end{equation}

\subsection{$p$-adic Preliminaries}\label{subsec_padicPrelim}
We define the Teichm\"{u}ller character to be the primitive character $\omega: \mathbb{F}_p \rightarrow\mathbb{Z}^{*}_p$ satisfying $\omega(x) \equiv x \pmod p$ for all $x \in \{0,1, \ldots, p-1\}$.
We now recall the $p$-adic gamma function. For further details, see \cite{Ko}.
Let $p$ be an odd prime.  For $n \in \mathbb{Z}^{+}$ we define the $p$-adic gamma function as
\begin{align*}
\gfp{n} &:= {(-1)}^n \prod_{\substack{0<j<n\\p \nmid j}} j \\
\intertext{and extend to all $x \in\mathbb{Z}_p$ by setting $\gfp{0}:=1$ and} 
\gfp{x} &:= \lim_{n \rightarrow x} \gfp{n}
\end{align*}
for $x\neq 0$, where $n$ runs through any sequence of positive integers $p$-adically approaching $x$. 
This limit exists, is independent of how $n$ approaches $x$, and determines a continuous function
on $\mathbb{Z}_p$ with values in $\mathbb{Z}^{*}_p$.
We now state a product formula for the $p$-adic gamma function.
If $m\in\mathbb{Z}^{+}$, $p \nmid m$ and $x=\frac{r}{p-1}$ with $0\leq r \leq p-1$ then
\begin{equation}\label{for_pGammaMult}
\prod_{h=0}^{m-1} \gfp{\tfrac{x+h}{m}}=\omega\left(m^{(1-x)(1-p)}\right)
\gfp{x} \prod_{h=1}^{m-1} \gfp{\tfrac{h}{m}}.
\end{equation}
We note also that
\begin{equation}\label{for_pGammaOneMinus}
\gfp{x}\gfp{1-x} = {(-1)}^{x_0},
\end{equation}
where $x_0 \in \{1,2, \dotsc, {p}\}$ satisfies $x_0 \equiv x \pmod {p}$.
\noindent The Gross-Koblitz formula \cite{GK} allows us to relate Gauss sums and the $p$-adic gamma function. Let $\pi \in \mathbb{C}_p$ be the fixed root of $x^{p-1}+p=0$ which satisfies ${\pi \equiv \zeta_p-1 \pmod{{(\zeta_p-1)}^2}}$. Then we have the following result.
\begin{theorem}[Gross, Koblitz \cite{GK}]\label{thm_GrossKoblitz}
For $ j \in \mathbb{Z}$,
\begin{equation*} 
g(\bar{\omega}^j)=-\pi^{(p-1) \langle{\frac{j}{p-1}}\rangle} \: \gfp{\langle{\tfrac{j}{p-1}}\rangle}.
\end{equation*}
\end{theorem}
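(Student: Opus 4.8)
The plan is to prove the formula by Dwork's method: realize the Gauss sum through a $p$-adic analytic splitting function for the additive character, and then extract its value by averaging a convergent power series over the group $\mu_{p-1}\subset\mathbb{Z}_p^{*}$ of $(p-1)$-th roots of unity, identifying the resulting arithmetic-progression sum of Taylor coefficients with a value of Morita's $\gfp{\cdot}$. Throughout set $j_0:=(p-1)\langle\frac{j}{p-1}\rangle\in\{0,1,\dots,p-2\}$, so that $j_0\equiv j\imod{p-1}$ and the asserted identity reads $g(\bar\omega^j)=-\pi^{j_0}\,\gfp{\frac{j_0}{p-1}}$.

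First I would introduce the Dwork exponential
\[
E_{\pi}(X):=\exp\!\bigl(\pi X-\pi X^{p}\bigr)=\sum_{n\ge 0}\lambda_n X^{n}
\]
and establish its two defining properties. (i) Although $\exp(\pi X)$ converges only on the open unit disk, the cancellation produced by the factor $\exp(-\pi X^{p})$ forces $v_p(\lambda_n)\ge\frac{(p-1)n}{p^{2}}$, so $E_\pi$ has radius of convergence strictly greater than $1$ and in particular converges at every $t\in\mu_{p-1}$. (ii) The normalization $\pi\equiv\zeta_p-1\imod{(\zeta_p-1)^{2}}$, together with the functional relation coming from $X\mapsto X^{p}$, shows that $E_\pi$ splits the additive character, i.e.\ $E_\pi(\omega(x))=\zeta_p^{\,x}=\theta(x)$ for every $x\in\mathbb{F}_p$, where the left-hand side denotes the convergent sum $\sum_n\lambda_n\,\omega(x)^{n}$ and not naive substitution into the exponent.

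Next I would feed (ii) into the Gauss sum and run the roots-of-unity average. Since $\bar\omega^{j}(x)=\omega(x)^{-j}$ and $\omega$ maps $\mathbb{F}_p^{*}$ bijectively onto $\mu_{p-1}$,
\[
g(\bar\omega^{j})=\sum_{x\in\mathbb{F}_p^{*}}\bar\omega^{j}(x)\,\theta(x)=\sum_{t\in\mu_{p-1}}t^{-j}\,E_\pi(t)=\sum_{n\ge 0}\lambda_n\sum_{t\in\mu_{p-1}}t^{\,n-j},
\]
the last interchange being immediate because the outer sum over $\mu_{p-1}$ is finite. Using $\sum_{t\in\mu_{p-1}}t^{m}=(p-1)$ when $(p-1)\mid m$ and $0$ otherwise, only the exponents $n\equiv j\imod{p-1}$ survive, whence
\[
g(\bar\omega^{j})=(p-1)\sum_{k\ge 0}\lambda_{\,j_0+k(p-1)}.
\]

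The final and decisive step is to evaluate this progression sum. Expanding the product of the two exponentials gives $\lambda_n=\sum_{a+pb=n}\frac{\pi^{a}(-\pi)^{b}}{a!\,b!}$; along the progression $n=j_0+k(p-1)$ one has $a+b=j_0+(p-1)(k-b)$, so every factor of $\pi$ beyond $\pi^{j_0}$ is absorbed into a power of $\pi^{p-1}=-p$, and $\pi^{j_0}$ factors out of the whole sum. What remains is a $p$-adically convergent combination of the reciprocal factorials $1/(a!\,b!)$ with the resulting $p$-power weights, and the Dwork cancellations collapse it to precisely the limit of products of integers with multiples of $p$ deleted that defines $\gfp{\cdot}$; this yields $(p-1)\sum_{k}\lambda_{j_0+k(p-1)}=-\pi^{j_0}\,\gfp{\frac{j_0}{p-1}}$. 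I expect this coefficient computation—controlling the $p$-adic convergence of the rearrangement and recognizing the limit as Morita's interpolation—to be the main obstacle, since it is the analytic core of the method. As an independent check the formula is forced to be compatible with the relations already available in the excerpt: under $j\mapsto -j$ the Gauss-sum product identity (\ref{for_GaussConj}) matches the reflection formula (\ref{for_pGammaOneMinus}); the Hasse--Davenport relation (Theorem \ref{thm_HD}) matches the multiplication formula (\ref{for_pGammaMult}); and at $j=0$ both sides equal $-1$, since $g(\varepsilon)=\sum_{x\in\mathbb{F}_p^{*}}\zeta_p^{x}=-1=-\pi^{0}\gfp{0}$.
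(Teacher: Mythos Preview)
The paper does not prove this theorem at all: it is quoted from \cite{GK} as a background result in the preliminaries and used as a black box (to pass between Gauss sums and $p$-adic gamma values in Lemmas~\ref{Quad_Sum} and~\ref{NonQuad_Sum} and in the proof of Theorem~\ref{thm_3F2QuadTrans}). So there is no ``paper's proof'' to compare against.

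That said, your outline is a faithful sketch of the Dwork--Boyarsky approach to Gross--Koblitz, which is one of the standard proofs (distinct from the original crystalline-cohomology argument of Gross and Koblitz). The first two steps---the overconvergence of $E_\pi$ and the splitting $E_\pi(\omega(x))=\zeta_p^{x}$, followed by the orthogonality computation giving $g(\bar\omega^{j})=(p-1)\sum_{k\ge0}\lambda_{j_0+k(p-1)}$---are correct and routine. You are right that the entire weight of the argument lies in the last step, identifying that arithmetic-progression sum with $-\pi^{j_0}\gfp{\tfrac{j_0}{p-1}}$: this is genuinely delicate and in the literature is typically done either via Dwork's recursion for the partial sums (Boyarsky) or via Washington's stationary-phase/Dieudonn\'e-module calculation. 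Your paragraph gestures at the mechanism (factoring out $\pi^{j_0}$ and absorbing the remaining $\pi^{p-1}=-p$ factors) but stops short of an argument; to turn it into a proof you would need to exhibit the recursion that pins down the limit as Morita's interpolation of $(-1)^n\prod_{0<j<n,\;p\nmid j}j$. The consistency checks you list (reflection, Hasse--Davenport, the value at $j=0$) are correct and reassuring but of course do not substitute for that computation.
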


We will need the following lemmas in the proofs of Theorems \ref{thm_GMainId} and \ref{thm_GReduction} respectively.
\begin{lemma}\label{Quad_Sum}
For $p$ an odd prime and $0 < j < p-1 \in \mathbb{Z}$,
\begin{equation*}
\frac{\biggfp{\langle \frac{1}{2} -\frac{j}{p-1}\rangle}\biggfp{\frac{j}{p-1}}}{\biggfp{\frac{1}{2}}}
(-p)^{-\lfloor{\frac{1}{2} -\frac{j}{p-1}}\rfloor} 
=
-\sum_{t=2}^{p-1} \; \bar{\omega}^j\left(\frac{4(1-t)}{t^2}\right)
\end{equation*}
\end{lemma}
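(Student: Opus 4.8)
The plan is to prove the identity by transforming the right-hand sum into a Gauss sum, applying the Gross--Koblitz formula, and matching with the left-hand side. First I would rewrite the summand on the right using the definition of the Teichm\"{u}ller character and Gauss sum machinery. The sum $\sum_{t=2}^{p-1} \bar{\omega}^j\bigl(\tfrac{4(1-t)}{t^2}\bigr)$ can be reindexed: since $\bar{\omega}^j$ is multiplicative, we have $\bar{\omega}^j\bigl(\tfrac{4(1-t)}{t^2}\bigr) = \bar{\omega}^j(4)\,\bar{\omega}^j(1-t)\,\omega^{2j}(t)$. The natural move is to recognize the resulting character sum over $t$ as a Jacobi sum. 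Indeed, extending the range to $t=0,1$ (where the summand vanishes because $\bar{\omega}^j(0)=0$ kills $t=0$ and $\bar{\omega}^j(1-t)=0$ kills $t=1$), I obtain $\bar{\omega}^j(4)\sum_{t\in\mathbb{F}_p}\omega^{2j}(t)\,\bar{\omega}^j(1-t) = \bar{\omega}^j(4)\,J(\omega^{2j}, \bar{\omega}^j)$, a Jacobi sum of the characters $\omega^{2j}$ and $\bar{\omega}^j$.

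Next I would convert this Jacobi sum into Gauss sums via (\ref{for_JactoGauss}). Since the product $\omega^{2j}\bar{\omega}^j = \omega^{j}$ is nontrivial for $0<j<p-1$, the first case applies and gives
\[
J(\omega^{2j},\bar{\omega}^j)=\frac{g(\omega^{2j})\,g(\bar{\omega}^j)}{g(\omega^{j})}.
\]
Now I would apply the Gross--Koblitz formula (Theorem \ref{thm_GrossKoblitz}) to each Gauss sum. Writing each $g(\bar{\omega}^k)$ as $-\pi^{(p-1)\langle k/(p-1)\rangle}\gfp{\langle k/(p-1)\rangle}$, and being careful to express $g(\omega^{2j})$ and $g(\omega^j)$ as $g(\bar{\omega}^{p-1-2j})$ and $g(\bar{\omega}^{p-1-j})$ respectively (so the exponents land in the correct range), the powers of $\pi$ combine into a single power of $-p$ using $\pi^{p-1}=-p$. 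The $p$-adic gamma factors that emerge should be $\gfp{\langle \tfrac{1}{2}-\tfrac{j}{p-1}\rangle}$, $\gfp{\tfrac{j}{p-1}}$, and $\gfp{\tfrac{1}{2}}$, matching the left-hand side; here I would use that $\langle 2j/(p-1)\rangle$ relates to $\langle \tfrac12 - \tfrac{j}{p-1}\rangle$ and the reflection/half-integer structure, together with (\ref{for_pGammaOneMinus}) to handle any $\gfp{x}\gfp{1-x}$ products and signs.

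The main obstacle will be the careful bookkeeping of the $\pi$-exponents and the resulting floor function $-\lfloor \tfrac12 - \tfrac{j}{p-1}\rfloor$. Specifically, I need to verify that the sum of the fractional-part exponents $\langle \cdot\rangle$ appearing in the three Gauss sums collapses to exactly the exponent of $-p$ on the left, which requires tracking whether $\langle 2j/(p-1)\rangle$ equals $2\langle j/(p-1)\rangle$ or $2\langle j/(p-1)\rangle - 1$ (i.e., whether $j/(p-1)$ exceeds $\tfrac12$), and correlating this with the floor $\lfloor \tfrac12 - \tfrac{j}{p-1}\rfloor$. This case distinction, together with confirming the overall sign $-1$ and the factor $\bar{\omega}^j(4)$ versus the $\gfp{1/2}$ normalization, is where the delicate computation lies; the identity should then follow by matching both sides term by term.
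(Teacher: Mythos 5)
Your outline is essentially the paper's proof read in the opposite direction. The paper starts from the left-hand side, writes it via Gross--Koblitz as $-g(\phi\omega^j)g(\bar{\omega}^j)/g(\phi)$, applies the Hasse--Davenport formula (Theorem \ref{thm_HD} with $m=2$) to replace $g(\phi\chi)/g(\phi)$ by $g(\chi^2)\bar{\chi}(4)/g(\chi)$, and then recognizes $g(\chi^2)g(\bar{\chi})/g(\chi)$ as the Jacobi sum $J(\chi^2,\bar{\chi})=\sum_{t=2}^{p-1}\chi\bigl(t^2/(1-t)\bigr)$. You start from the right-hand side, correctly identify the sum as $\bar{\omega}^j(4)\,J(\omega^{2j},\bar{\omega}^j)=\bar{\omega}^j(4)\,g(\omega^{2j})g(\bar{\omega}^j)/g(\omega^j)$ (valid for all $0<j<p-1$, including $j=\tfrac{p-1}{2}$ where $\omega^{2j}=\varepsilon$ but $\omega^{2j}\bar{\omega}^j\neq\varepsilon$, so the first case of (\ref{for_JactoGauss}) applies), and the $\pi$-exponent bookkeeping does collapse to $(-p)^{-\lfloor \frac{1}{2}-\frac{j}{p-1}\rfloor}$ exactly as you anticipate.

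The one point where your stated toolkit is insufficient is the step you yourself flag as delicate. After Gross--Koblitz you hold $\gfp{\langle \frac{-2j}{p-1}\rangle}$ coming from $g(\omega^{2j})$, and you must trade it for $\gfp{\langle \frac{1}{2}-\frac{j}{p-1}\rangle}\gfp{\langle \frac{-j}{p-1}\rangle}\big/\bigl(\gfp{\frac{1}{2}}\,\bar{\omega}^j(4)\bigr)$. The reflection formula (\ref{for_pGammaOneMinus}) cannot produce this; the required identity is the multiplication formula (\ref{for_pGammaMult}) with $m=2$ (the $p$-adic duplication formula), which is precisely the Gross--Koblitz translation of the Hasse--Davenport relation the paper uses. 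Once you invoke (\ref{for_pGammaMult}) there, everything cancels cleanly: the factor $\omega\bigl(2^{(1-x)(1-p)}\bigr)=\bar{\omega}^j(4)$ cancels the $\bar{\omega}^j(4)$ in front of the Jacobi sum, the $\gfp{\langle \frac{-j}{p-1}\rangle}=\gfp{1-\frac{j}{p-1}}$ cancels the gamma factor from $g(\omega^j)$ in the denominator (so reflection is not actually needed), and the remaining sign $(-1)(-1)/(-1)=-1$ accounts for the minus sign in the statement. With that one substitution your argument closes and agrees with the paper's.
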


\begin{proof}
Let $\chi \in \widehat{\mathbb{F}^{*}_{p}}$. By the Hasse-Davenport formula, Theorem \ref{thm_HD}, with $m=2$ we see that
\begin{equation}\label{HD_phi}
g(\phi \chi) \, g(\chi) = g(\chi ^2) \, g(\phi) \, \bar{\chi}(4).
\end{equation}
Applying (\ref{HD_phi}) and (\ref{for_JactoGauss}) we get that, for $\chi \neq \varepsilon$,
\begin{equation*}
\frac{g(\phi \chi) g(\bar{\chi})}{g(\phi)} = \frac{g(\chi^2) g(\bar{\chi})}{g(\chi)} \bar{\chi}(4) = J(\chi^2, \bar{\chi}) \; \bar{\chi}(4)
\end{equation*}
Now consider the Jacobi sum
\begin{align*}
J(\chi^2, \bar{\chi})  = \sum_{t=2}^{p-1} \; \chi^2(t) \,\bar{\chi}(1-t) = \sum_{t=2}^{p-1} \; \chi \left(\frac{t^2}{1-t}\right).
\end{align*}
Therefore, for $\chi \neq \varepsilon$,
\begin{equation*}\label{for_lemQuad_Sum1}
\frac{g(\phi \chi) g(\bar{\chi})}{g(\phi)}  =  \sum_{t=2}^{p-1} \; \chi \left(\frac{t^2}{4(1-t)}\right).
\end{equation*}
Letting $\chi = {\omega}^j$, and using the Gross-Koblitz formula, Theorem \ref{thm_GrossKoblitz}, to convert the left-hand side to an expression in $p$-adic gamma functions, completes the proof. 
\end{proof}

\begin{lemma}\label{NonQuad_Sum}
For $p$ an odd prime and $0 \leq j < p-1 \in \mathbb{Z}$,
\begin{equation*}
\frac{\biggfp{\langle \frac{1}{2} -\frac{j}{p-1}\rangle}\biggfp{\frac{j}{p-1}}}{\biggfp{\frac{1}{2}}}
(-p)^{-\lfloor{\frac{1}{2} -\frac{j}{p-1}}\rfloor} 
=
-\sum_{t=2}^{p-1} \; \omega^j (-t)\; \omega^{\frac{p-1}{2}} (t(t-1))
\end{equation*}
\end{lemma}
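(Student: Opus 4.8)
The plan is to mirror the proof of Lemma \ref{Quad_Sum}, reusing its Gauss-sum computation and then routing the resulting Gauss-sum quotient through a \emph{different} Jacobi sum so as to land on the target character sum. Set $\chi=\omega^j$ and recall $\phi=\omega^{\frac{p-1}{2}}$. Since the left-hand side here is identical to that of Lemma \ref{Quad_Sum}, the same Gross--Koblitz computation (Theorem \ref{thm_GrossKoblitz}) shows that, for all $0\leq j<p-1$,
\[
\frac{\biggfp{\langle \tfrac12-\tfrac{j}{p-1}\rangle}\biggfp{\tfrac{j}{p-1}}}{\biggfp{\tfrac12}}(-p)^{-\lfloor \frac12-\frac{j}{p-1}\rfloor}=-\frac{g(\phi\chi)\,g(\bar{\chi})}{g(\phi)}.
\]
The case $j=0$ is included here, since Gross--Koblitz gives $g(\varepsilon)=-1$ and $\gfp{0}=1$. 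It therefore suffices to identify $g(\phi\chi)g(\bar{\chi})/g(\phi)$ with the sum $\sum_{t=2}^{p-1}\omega^j(-t)\,\omega^{\frac{p-1}{2}}(t(t-1))$.

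First I would rewrite the target sum as a Jacobi sum. Using $\omega^j(-t)=\chi(-1)\chi(t)$ and $\phi(t(t-1))=\phi(-1)\phi(t)\phi(1-t)$, each summand equals $\chi(-1)\phi(-1)(\chi\phi)(t)\phi(1-t)$; because the boundary terms $t=0,1$ vanish under the convention $\varepsilon(0)=\phi(0)=0$, this yields
\[
\sum_{t=2}^{p-1}\omega^j(-t)\,\omega^{\frac{p-1}{2}}(t(t-1))=\chi(-1)\,\phi(-1)\,J(\chi\phi,\phi).
\]

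For $\chi\neq\varepsilon$ (that is, $0<j<p-1$) I would then apply (\ref{for_JactoGauss}) in the form $J(\chi\phi,\phi)=g(\chi\phi)g(\phi)/g(\chi)$, which is valid since $\chi\phi\cdot\phi=\chi\neq\varepsilon$, and clear denominators using (\ref{for_GaussConj}): $g(\chi)g(\bar{\chi})=\chi(-1)p$ and $g(\phi)^2=\phi(-1)p$. A short substitution collapses $\chi(-1)\phi(-1)g(\chi\phi)g(\phi)/g(\chi)$ to exactly $g(\phi\chi)g(\bar{\chi})/g(\phi)$, so the left-hand side equals minus the target sum, as claimed.

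The remaining point, and the only place the argument departs from Lemma \ref{Quad_Sum}, is the trivial-character case $j=0$, where $\chi\phi\cdot\phi=\varepsilon$ forces the second branch of (\ref{for_JactoGauss}), namely $J(\phi,\phi)=-g(\phi)^2/p=-\phi(-1)$. This gives target sum $=\varepsilon(-1)\phi(-1)\bigl(-\phi(-1)\bigr)=-1$, which matches the direct evaluation of the left-hand side (equal to $1$, since all $\gfp{\cdot}$ factors cancel). I expect this boundary bookkeeping---tracking the $\varepsilon(0)=0$ convention and the branch switch in (\ref{for_JactoGauss})---to be the only subtle step; once the target sum is written as $\chi(-1)\phi(-1)J(\chi\phi,\phi)$, the core Gauss/Jacobi manipulation is routine.
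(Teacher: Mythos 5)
Your proposal is correct and follows essentially the same route as the paper: both convert the left-hand side to $-g(\phi\chi)g(\bar{\chi})/g(\phi)$ via Gross--Koblitz, identify that quotient with $\phi\chi(-1)J(\phi\chi,\phi)=\sum_{t=2}^{p-1}\chi(-t)\phi(t(t-1))$ using (\ref{for_GaussConj}) and (\ref{for_JactoGauss}), and check the trivial-character case $j=0$ separately through the second branch of (\ref{for_JactoGauss}). The only difference is cosmetic (you start from the character sum and work back to the Gauss-sum quotient, while the paper goes the other way), and your boundary bookkeeping at $j=0$ matches the paper's.
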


\begin{proof}
Let $\chi \in \widehat{\mathbb{F}^{*}_{p}}$. Using (\ref{for_GaussConj}) and (\ref{for_JactoGauss}), we get that for $\chi \neq \varepsilon$,
\begin{equation*}\label{for_lemQuad_Sum2}
\frac{g(\phi \chi) g(\bar{\chi})}{g(\phi)}
= \frac{g(\phi \chi) g(\phi) \, \chi(-1)}{g(\chi) \, \phi(-1)}
= \phi\chi(-1) J(\phi\chi, \phi)
= \sum_{t=2}^{p-1} \; \chi(-t) \, \phi(t(t-1))
\end{equation*}
Again using (\ref{for_GaussConj}) and (\ref{for_JactoGauss}) we see that
\begin{equation*}
\phi(-1) J(\phi, \phi) = -\frac{\phi(-1) g(\phi) g(\phi)}{p} = -1 = \frac{g(\phi) g(\varepsilon)}{g(\phi)}
\end{equation*}
Therefore, for all $\chi \in \widehat{\mathbb{F}^{*}_{p}}$,
\begin{equation*}
\frac{g(\phi \chi) g(\bar{\chi})}{g(\phi)}
= \phi\chi(-1) J(\phi\chi, \phi)
= \sum_{t=2}^{p-1} \; \chi(-t) \, \phi(t(t-1)).
\end{equation*}
Letting $\chi = {\omega}^j$, and using the Gross-Koblitz formula, Theorem \ref{thm_GrossKoblitz}, to convert the left-hand side to an expression in $p$-adic gamma functions, completes the proof. 
\end{proof}

%%%%%%%%%%%%%%%%%%%%%%%%%%%%%%%%%%%%%%%%%%%%%%%%%%
%%%%%%%%%%%%%%%%%%%%%%%%%%%%%%%%%%%%%%%%%%%%%%%%%%
%%%%%%%%%%%%%%%%%%%%%%%%%%%%%%%%%%%%%%%%%%%%%%%%%%
%%%%%%%%%%%%%%%%%%%%%%%%%%%%%%%%%%%%%%%%%%%%%%%%%%

\section{Finite Field Hypergeometric Functions}\label{sec_FFHypFn}

Hypergeometric functions over finite fields were originally defined by Greene \cite{G}, who first established these functions as analogues of classical hypergeometric functions.
Functions of this type were also introduced by Katz \cite{K} about the same time.
In the present article we use a normalized version of these functions defined by the second author in \cite{McC6}, which is more suitable for our purposes. The reader is directed to \cite[\S 2]{McC6} for the precise connections among these three classes of functions.
%Let $\widehat{\mathbb{F}^{*}_{p}}$ denote the group of multiplicative characters of $\mathbb{F}^{*}_{p}$.
%We extend the domain of $\chi \in \widehat{\mathbb{F}^{*}_{p}}$ to $\mathbb{F}_{p}$ by defining $\chi(0):=0$ (including for the trivial character $\varepsilon$) and denote $\bar{\chi}$ as the inverse of $\chi$.
%Let $\theta$ be a fixed non-trivial additive character of $\mathbb{F}_p$ and for $\chi \in \widehat{\mathbb{F}^{*}_{p}}$ we define the Gauss sum $g(\chi):= \sum_{x \in \mathbb{F}_p} \chi(x) \theta(x)$.

\begin{defi}\cite[Definition 1.4]{McC6}\label{def_F} 
For $A_0,A_1,\dotsc, A_n, B_1, \dotsc, B_n \in \widehat{\mathbb{F}_p^{*}}$ and $x \in \mathbb{F}_{p}$ define
\begin{multline*}\label{def_HypFnFF}
{_{n+1}F_{n}} {\biggl( \begin{array}{cccc} A_0 & A_1 & \dotsc, & A_n \\
 \phantom{A_0} & B_1 & \dotsc, & B_n \end{array}
\Big| \; x \biggr)}_{p}\\
:= \frac{1}{p-1}  \sum_{\chi \in \widehat{\mathbb{F}_p^{*}}} 
\prod_{i=0}^{n} \frac{g(A_i \chi)}{g(A_i)}
\prod_{j=1}^{n} \frac{g(\bar{B_j \chi})}{g(\bar{B_j})}
 g(\bar{\chi})
 \chi(-1)^{n+1}
 \chi(x).
 \end{multline*}
\end{defi}
\noindent Many of the results concerning hypergeometric functions over finite fields that we quote from other articles were originally stated using Greene's function. 
If this is the case, note then that we have reformulated them in terms ${_{n+1}F_{n}}(\cdots)$ as defined above.  

Kilbourn \cite{Ki} proved that, if $p$ is an odd prime, then
\begin{equation*}\label{thm_Kilbourn1}
{_{4}F_3} \Biggl[ \begin{array}{cccc} \frac{1}{2} & \frac{1}{2} & \frac{1}{2} & \frac{1}{2}\\
\phantom{\frac{1}{2}} & 1 & 1 & 1 \end{array}
\bigg| \; 1 \Biggr]_{p-1}
\equiv
{_{4}F_3}  \biggl( \begin{array}{cccc} \phi & \phi & \phi & \phi \\
\phantom{\phi} & \varepsilon & \varepsilon & \varepsilon \end{array}
\bigg| \; 1 \biggr)_{p}-p \pmod{p^3} .
\end{equation*}
He combined this with the following result of Ahlgren and Ono \cite{AO} to establish (\ref{thm_Kilbourn}):
\begin{equation}\label{thm_AO}
  {_{4}F_3}  \biggl( \begin{array}{cccc} \phi & \phi & \phi & \phi \\
\phantom{\phi} & \varepsilon & \varepsilon & \varepsilon \end{array}
\bigg| \; 1 \biggr)_{p}-p
=a(p).
\end{equation}
\noindent 
%It is interesting to note the analogue between both sets of parameters in (\ref{thm_Kilbourn1}). ``One over two'' has been replaced with a character of order 2 and ``one over one'' has been replaced with a character of order one. 
The main method of relating finite field hypergeometric functions and Fourier coefficients of modular forms has been via the Eichler-Selberg trace formula \cite{AO, F, F2, F3, FOP, L, L3, McC7}. But, apart from a number of special cases, including Ahlgren and Ono's relation above, most of these result are either restricted to primes in certain congruence classes to facilitate the existence of characters of certain orders, which appear as arguments in the finite field hypergeometric functions, or require much more complex relations than (\ref{thm_AO}). This makes finite field hypergeometric functions unsuitable for establishing the remaining Rodriguez-Villegas' supercongruence conjectures in full generality.

For example, let 
\begin{equation*}\label{for_ModForm}
g(z):= g_1(z)+5g_2(z)+20g_3(z)+25g_4(z)+25g_5(z)=\sum_{n=1}^{\infty} b(n) q^n
\end{equation*}
where $g_i(z):=\eta^{5-i}(z) \hspace{2pt} \eta^4(5z) \hspace{2pt} \eta^{i-1}(25z)$,
$\eta(z):=q^{\frac{1}{24}} \prod_{n=1}^{\infty}(1-q^n)$ is the Dedekind eta function and $q:=e^{2 \pi i z}$. Then $f$ is a newform of weight four on the congruence subgroup $\Gamma_0(25)$, and when $p\equiv 1 \pmod 5$, we get that \cite{McC4, McC5}
\begin{equation}\label{thm_DMCsuper}
{_{4}F_3} \Biggl[ \begin{array}{cccc} \frac{1}{5} & \frac{2}{5} & \frac{3}{5} & \frac{4}{5}\\
\phantom{\frac{1}{5}} & 1 & 1 & 1 \end{array}
\bigg| \; 1 \Biggr]_{p-1}
\stackrel{\pmod {p^3}} \equiv
{_{4}F_3}  \biggl( \begin{array}{cccc} \psi & \psi^2 & \psi^3 & \psi^4 \\
\phantom{\psi} & \varepsilon & \varepsilon & \varepsilon \end{array}
\bigg| \; 1 \biggr)_{p}-p
= b(p)
\end{equation}
where $\psi \in \widehat{\mathbb{F}_p^{*}}$ is a character of order 5. Because the finite field hypergeometric function above is based on characters of order $5$, we know that the result cannot be extended beyond $p \equiv 1 \pmod 5$ using this function.
Extending (\ref{thm_DMCsuper}) to all primes, and proving another supercongruence conjecture of Rodriguez-Villegas, was the motivation of the second author to develop ${_{n}G_{n}}[\cdots]$.
We have the following relationship between ${_{n}G_{n}}[\cdots]$ and ${_{n+1}F_{n}}(\cdots)$.
\begin{lemma}\cite[Lemma 3.3]{McC8}\label{lem_G_to_F}
For a fixed odd prime $p$, let $A_i, B_k \in \widehat{\mathbb{F}_p^{*}}$ be given by $\bar{\omega}^{a_i(p-1)}$ and $\bar{\omega}^{b_k(p-1)}$ respectively, where $\omega$ is the Teichm\"{u}ller character, and let $s \in \mathbb{F}_p$ . Then
\begin{equation*}
{_{n+1}F_{n}} {\biggl( \begin{array}{cccc} A_0 & A_1 & \dotsc & A_n \\
 \phantom{B_0} & B_1 & \dotsc & B_n \end{array}
\Big| \; s \biggr)}_{p}
=
{_{n+1}G_{n+1}}
\biggl[ \begin{array}{cccc} a_0 & a_1 & \dotsc & a_n \\
 1 & b_1 & \dotsc & b_n \end{array}
\Big| \; s^{-1} \; \biggr]_p.
\end{equation*}

\end{lemma}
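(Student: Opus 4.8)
The plan is to start from the character-sum definition of ${_{n+1}F_{n}}(\cdots)$ and rewrite every Gauss sum in terms of Morita's $p$-adic gamma function via the Gross--Koblitz formula (Theorem \ref{thm_GrossKoblitz}), so that the resulting expression matches the definition of ${_{n+1}G_{n+1}}[\cdots]$ term by term. Since both functions depend only on the parameters modulo $1$, I may assume $a_i,b_k \in [0,1)$, so that $a_i = \langle a_i\rangle$ and $b_k = \langle b_k\rangle$. The first step is to reindex the sum over $\chi \in \widehat{\mathbb{F}_p^{*}}$ by writing $\chi = \omega^j$ for $j=0,1,\dots,p-2$; with the hypotheses $A_i = \bar\omega^{a_i(p-1)}$ and $B_k = \bar\omega^{b_k(p-1)}$ this gives $A_i\chi = \bar\omega^{a_i(p-1)-j}$, $\overline{B_k\chi} = \bar\omega^{j-b_k(p-1)}$, $\bar B_k = \bar\omega^{-b_k(p-1)}$ and $\bar\chi = \bar\omega^j$.

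\textbf{Converting Gauss sums.} Next I would apply Gross--Koblitz to each Gauss sum. Computing the relevant fractional parts, the ratio $g(A_i\chi)/g(A_i)$ produces exactly $\gfp{\langle a_i - \frac{j}{p-1}\rangle}/\gfp{\langle a_i\rangle}$ up to a power of $\pi$, and likewise $g(\overline{B_k\chi})/g(\bar B_k)$ produces $\gfp{\langle -b_k + \frac{j}{p-1}\rangle}/\gfp{\langle -b_k\rangle}$ up to a power of $\pi$; the $-1$ in each Gross--Koblitz evaluation cancels between the numerator and denominator of every ratio. The remaining factor $g(\bar\chi) = g(\bar\omega^j) = -\pi^{j}\gfp{\frac{j}{p-1}}$ supplies precisely the ``extra'' denominator slot $b=1$ of ${_{n+1}G_{n+1}}$, since for that slot one checks $\gfp{\langle -1 + \frac{j}{p-1}\rangle} = \gfp{\frac{j}{p-1}}$, $\gfp{\langle -1\rangle} = \gfp{0} = 1$, and the associated exponent of $-p$ is $0$. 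The floor contributions are handled by writing each $\pi$-exponent as, e.g., $(p-1)\bigl(\langle a_i - \frac{j}{p-1}\rangle - \langle a_i\rangle\bigr) = -j - (p-1)\lfloor \langle a_i\rangle - \frac{j}{p-1}\rfloor$ and using $\pi^{p-1} = -p$ to turn $\pi^{-(p-1)\lfloor\cdots\rfloor}$ into $(-p)^{-\lfloor \langle a_i\rangle - \frac{j}{p-1}\rfloor}$; the analogous computation for the $b$-parameters produces $(-p)^{-\lfloor \langle -b_k\rangle + \frac{j}{p-1}\rfloor}$, exactly as in Definition \ref{def_Gp}.

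\textbf{The main point, and the signs.} The step I expect to require the most care is verifying that the leftover powers of $\pi$ cancel completely: the $n+1$ numerator ratios each leave $\pi^{-j}$, the $n$ denominator ratios each leave $\pi^{+j}$, and the factor $g(\bar\chi)$ leaves $\pi^{+j}$, for a total exponent $-(n+1)j + nj + j = 0$. It remains to collect signs and characters: the product of all numerator/denominator ratios is sign-free, the single surviving $-1$ comes from $g(\bar\chi)$ and converts the prefactor $\frac{1}{p-1}$ of ${_{n+1}F_{n}}$ into the $\frac{-1}{p-1}$ of ${_{n+1}G_{n+1}}$; the identity $\omega(-1) = -1$ turns $\chi(-1)^{n+1} = \omega^j(-1)^{n+1}$ into $(-1)^{j(n+1)}$; and $\chi(s) = \omega^j(s) = \bar\omega^j(s^{-1})$ matches the argument $s^{-1}$ of the $G$-function. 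Assembling these factors term by term over $j$ reproduces the definition of ${_{n+1}G_{n+1}}[\cdots\,|\,s^{-1}]$, completing the proof. I note that the trivial-character instances (such as $\chi = \varepsilon$ or $A_i\chi = \varepsilon$) need no separate treatment, since Gross--Koblitz applies uniformly to every $\bar\omega^m$; the only genuine subtlety is the $\pi$-power accounting above.
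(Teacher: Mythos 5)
Your proof is correct, and it is essentially the standard argument: the paper itself gives no proof of this lemma but cites it from \cite{McC8}, where it is established by exactly this computation --- reindexing the character sum by $\chi=\omega^j$, applying Gross--Koblitz to each Gauss sum, and tracking the $\pi$-exponents so that the floor functions and the factor $(-p)^{-\lfloor\cdot\rfloor}$ emerge while the residual powers of $\pi$ cancel. Your accounting of the signs, the $b=1$ slot coming from $g(\bar\chi)$, and the uniform treatment of the trivial-character cases are all accurate.
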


%We note that if $\chi \in \widehat{\mathbb{F}_p^{*}}$ is a character of order $d$ and is given by $\bar{\omega}^{x(p-1)}$  then $x = \frac{m}{d} \in \mathbb{Q}$ and $p\equiv 1 \pmod d$.
%\noindent Therefore, given a hypergeometric function over $\mathbb{F}_p$ whose arguments are characters of prescribed order, the function will only be defined for primes $p$ in certain congruence classes.
%By Lemma \ref{lem_G_to_F}, for primes in these congruence classes, the finite field hypergeometric function will be related to an appropriate ${_{n}G_{n}}[\cdots]$ function.
%However this corresponding ${_{n}G_{n}}[\cdots]$ will be defined at all primes not dividing the orders of the particular characters appearing in the the finite field hypergeometric function.  
\noindent This relation opens the possibility of extending results involving finite field hypergeometric functions to all but finitely many primes by using ${_{n}G_{n}}[\cdots]$ instead. This has been possible in many cases \cite{McC4, McC5, McC8}, including for (\ref{thm_DMCsuper}). 
When $p \neq5$ and $h(p):=\biggfp{\frac{1}{5}}\biggfp{\frac{2}{5}}\biggfp{\frac{3}{5}}\biggfp{\frac{4}{5}}$, we get that \cite{McC4, McC5}, 
\begin{equation}\label{thm_DMCsuper2}
{_{4}F_3} \Biggl[ \begin{array}{cccc} \frac{1}{5} & \frac{2}{5} & \frac{3}{5} & \frac{4}{5}\\
\phantom{\frac{1}{5}} & 1 & 1 & 1 \end{array}
\bigg| \; 1 \Biggr]_{p-1}
\stackrel{\pmod {p^3}} \equiv
{_{4}G_{4}}
\biggl[ \begin{array}{cccc} \frac{1}{5} & \frac{2}{5} & \frac{3}{5} & \frac{4}{5} \\
 1 & 1 & 1 & 1 \end{array}
\Big| \; 1 \; \biggr]_p
- h(p) \cdot p
= b(p),
\end{equation}
which was the second of Rodriguez-Villegas' conjectures to be proven.

%%%%%%%%%%%%%%%%%%%%%%%%%%%%%%%%%%%%%%%%%%%%%%%%%%
%%%%%%%%%%%%%%%%%%%%%%%%%%%%%%%%%%%%%%%%%%%%%%%%%%
%%%%%%%%%%%%%%%%%%%%%%%%%%%%%%%%%%%%%%%%%%%%%%%%%%
%%%%%%%%%%%%%%%%%%%%%%%%%%%%%%%%%%%%%%%%%%%%%%%%%%

\section{Proofs}\label{sec_Proofs}

We begin by proving three transformations of ${_{n}G_{n}}[\cdots]_p$.

\begin{proof}[Proof of Theorem \ref{thm_3F2QuadTrans}]
Let $p$ be an odd prime. Taking $A=B=C= \phi_p$ in \cite[Theorem 4.28]{G} and reformulating in terms of Gauss sums and ${_{n+1}F_{n}}(\cdots)$, as defined in Definition \ref{def_F}, we get that, for $x \neq 1$,
\begin{multline}\label{for_Q3F2}
{_{3}F_{2}} {\biggl( \begin{array}{cccc} \phi & \phi & \phi \\
 \phantom{\phi} & \varepsilon & \varepsilon \end{array}
\Big| \; x \biggr)}_{p}\\
= 
\delta(1+x)  \cdot p \cdot \phi(-1)
+\frac{\phi(1-x)}{p-1}
 \sum_{\chi \in \widehat{\mathbb{F}_p^{*}}} 
\frac{g(\phi \chi^2) g(\phi \chi) g(\bar{\chi})^3}{g(\phi)^2} \; \chi \left(\frac{x}{(1-x)^2}\right).
\end{multline}
By definition of the Teichm\"{u}ller character we know that $\{\omega^j \mid 0 \leq j \leq p-2 \} =  \widehat{\mathbb{F}_p^{*}} $. Using Lemma \ref{lem_G_to_F} and the Gross-Koblitz formula, Theorem \ref{thm_GrossKoblitz}, to re-write equation \eqref{for_Q3F2} in the $p$-adic setting, we now have that
\begin{multline}\label{for_3GQuad1}
{_{3}G_{3}}
\biggl[ \begin{array}{cccc} \frac{1}{2} & \frac{1}{2} & \frac{1}{2} \\
 1 & 1 & 1 \end{array}
\Big| \; x^{-1} \; \biggr]_p
= 
\delta(1+x)  \cdot p \cdot \omega^{\frac{p-1}{2}}(-1)
-\frac{\omega^{\frac{p-1}{2}}(1-x)}{p-1}\\
\times \sum_{j=0}^{p-2}
(-p)^{-\lfloor{\frac{1}{2} -\frac{2j}{p-1}}\rfloor-\lfloor{\frac{1}{2} -\frac{j}{p-1}}\rfloor} 
\frac{\biggfp{\langle \frac{1}{2} -\frac{2j}{p-1}\rangle}\biggfp{\langle \frac{1}{2} -\frac{j}{p-1}\rangle}\biggfp{\frac{j}{p-1}}^3}{\biggfp{\frac{1}{2}}^2}
\; \omega^j \left(\frac{x}{(1-x)^2}\right).
\end{multline}
Applying (\ref{for_pGammaMult}) with $x =\langle \frac{1}{2} -\frac{2j}{p-1}\rangle$ and $m=2$ yields
\begin{equation*}
\Biggfp{\Big\langle \frac{1}{2} -\frac{2j}{p-1}\Big\rangle}
=\frac{\biggfp{\frac{1}{2}\langle \frac{1}{2} -\frac{2j}{p-1}\rangle} \biggfp{\frac{1}{2}\langle \frac{1}{2} -\frac{2j}{p-1}\rangle+\frac{1}{2}}}{\biggfp{\frac{1}{2}} \; \omega\left(2 ^{(1- \langle \frac{1}{2} -\frac{2j}{p-1}\rangle)(1-p)}\right)}.
\end{equation*}
By considering $j$ in the intervals $[0, \lfloor{\frac{p-1}{4}}\rfloor], (\lfloor{\frac{p-1}{4}}\rfloor, \lfloor{\frac{3(p-1)}{4}}\rfloor]$ and $(\lfloor{\frac{3(p-1)}{4}}\rfloor, p-2]$ it straightforward to verify that
\begin{equation*}
\Biggfp{\frac{1}{2}\Big\langle \frac{1}{2} -\frac{2j}{p-1}\rangle} \Biggfp{\frac{1}{2}\Big\langle \frac{1}{2} -\frac{2j}{p-1}\Big\rangle+\frac{1}{2}}=
\Biggfp{\Big\langle \frac{1}{4} -\frac{j}{p-1}\Big\rangle}\Biggfp{\Big\langle \frac{3}{4} -\frac{j}{p-1}\Big\rangle}
\end{equation*}
and
\begin{equation*}
\omega\left(2 ^{(1- \langle \frac{1}{2} -\frac{2j}{p-1}\rangle)(1-p)}\right)=\omega^{\frac{p-1}{2}}(2) \; \bar{\omega}^j(4).
\end{equation*}
Therefore
\begin{equation}\label{for_gammap2j}
\Biggfp{\Big\langle \frac{1}{2} -\frac{2j}{p-1}\Big\rangle}
=\frac{\biggfp{\langle \frac{1}{4} -\frac{j}{p-1}\rangle} \biggfp{\langle \frac{3}{4} -\frac{j}{p-1}\rangle}}{\biggfp{\frac{1}{2}} \; \omega^{\frac{p-1}{2}}(2) \; \bar{\omega}^j(4)}.
\end{equation}
Note also that
\begin{equation}\label{for_floor2j}
\Big\lfloor{\frac{1}{2} -\frac{2j}{p-1}}\Big\rfloor = \Big\lfloor{\frac{1}{4} -\frac{j}{p-1}}\Big\rfloor + \Big\lfloor{\frac{3}{4} -\frac{j}{p-1}}\Big\rfloor.
\end{equation}
Accounting for (\ref{for_gammap2j}) and (\ref{for_floor2j}) in (\ref{for_3GQuad1}) yields
\begin{multline*}\label{for_3GQuad2}
{_{3}G_{3}}
\biggl[ \begin{array}{cccc} \frac{1}{2} & \frac{1}{2} & \frac{1}{2} \\
 1 & 1 & 1 \end{array}
\Big| \; x^{-1} \; \biggr]_p
= 
\delta(1+x)  \cdot p \cdot \omega^{\frac{p-1}{2}}(-1)
-\frac{\omega^{\frac{p-1}{2}}(2(1-x))}{p-1}\\
\times \sum_{j=0}^{p-2}
\bar{\omega}^j \left(\frac{(1-x)^2}{4x}\right)
\frac{\biggfp{\langle \frac{1}{4} -\frac{j}{p-1}\rangle} \biggfp{\langle \frac{3}{4} -\frac{j}{p-1}\rangle}\biggfp{\langle \frac{1}{2} -\frac{j}{p-1}\rangle}\biggfp{\frac{j}{p-1}}^3}{\biggfp{\frac{1}{2}}^3}\\
\cdot (-p)^{-\lfloor{\frac{1}{4} -\frac{j}{p-1}}\rfloor - \lfloor{\frac{3}{4} -\frac{j}{p-1}}\rfloor-\lfloor{\frac{1}{2} -\frac{j}{p-1}}\rfloor}  .
\end{multline*}
Noting that $\bar{\omega}^j(-1) = (-1)^j$ and $\biggfp{\frac{1}{2}}^4=1$, by (\ref{for_pGammaOneMinus}), yields the result.
\end{proof}

\begin{proof}[Proof of Theorem \ref{thm_GReduction}]
Let $p$ be an odd prime. By definition
\begin{multline*}
{_{n+1}G_{n+1}}
\biggl[ \begin{array}{ccccc} a_1 & a_2 & \dotsc & a_n & \frac{1}{2} \\
 1 & 1 & \dotsc & 1 & 1 \end{array}
\Big| \; s \; \biggr]_p
= \frac{-1}{p-1}  \sum_{j=0}^{p-2} 
(-1)^{j(n+1)}\;
\bar{\omega}^j(s)\\
\times \prod_{i=1}^{n} 
\frac{\biggfp{\langle a_i -\frac{j}{p-1}\rangle} \, \biggfp{\frac{j}{p-1}}}{\biggfp{\langle a_i \rangle}}
%(-p)^{-\lfloor{a_i-\frac{j}{p-1}}\rfloor -\lfloor{-b_i+\frac{j}{p-1}}\rfloor + \lfloor{a_i}\rfloor + \lfloor{-b_i}\rfloor}.
(-p)^{-\lfloor{\langle a_i \rangle -\frac{j}{p-1}}\rfloor}\\
\times
\frac{\biggfp{\langle \frac{1}{2} -\frac{j}{p-1}\rangle}\biggfp{\frac{j}{p-1}}}{\biggfp{\frac{1}{2}}}
(-p)^{-\lfloor{\frac{1}{2} -\frac{j}{p-1}}\rfloor}. 
\end{multline*}
Applying Lemma \ref{NonQuad_Sum} to the expression on the last line above and tidying up yields
\begin{multline*}
{_{n+1}G_{n+1}}
\biggl[ \begin{array}{ccccc} a_1 & a_2 & \dotsc & a_n & \frac{1}{2} \\
 1 & 1 & \dotsc & 1 & 1 \end{array}
\Big| \; s \; \biggr]_p\\
=
-\sum_{t=2}^{p-1} \; 
{_{n}G_{n}}
\biggl[ \begin{array}{cccc} a_1 & a_2 & \dotsc & a_n \\
 1 & 1 & \dotsc & 1 \end{array}
\Big| \; st^{-1} \; \biggr]_p
\cdot \omega^{\frac{p-1}{2}} (t(t-1))
\end{multline*}
Letting $t \rightarrow t^{-1}$ and noting that $\omega^{\frac{p-1}{2}}(t^2)=1$ completes the proof.
\end{proof}

\begin{proof}[Proof of Theorem \ref{thm_GInversion}]
Let $p$ be an odd prime and let $2 \leq n \in \mathbb{Z}$. By definition
\begin{multline*}
{_{n}G_{n}}
\biggl[ \begin{array}{cccccc} \frac{1}{2} &  \frac{1}{2} & \dotsc &  \frac{1}{2} & \frac{1}{4} & \frac{3}{4} \\
 1 & 1 & \dotsc & 1 & 1 & 1 \end{array}
\Big| \; s \; \biggr]_p\\
= \frac{-1}{p-1}  \sum_{j=0}^{p-2} 
(-1)^{jn}\;
\bar{\omega}^j(s)
\frac{\biggfp{\langle \frac{1}{2} -\frac{j}{p-1}\rangle}^{n-2} \,\biggfp{\langle \frac{1}{4} -\frac{j}{p-1}\rangle}\biggfp{\langle \frac{3}{4} -\frac{j}{p-1}\rangle}\biggfp{\langle \frac{j}{p-1}\rangle}^{n}}
{\biggfp{\frac{1}{2}}^{n-2}\biggfp{\frac{1}{4} }\biggfp{ \frac{3}{4}}}\\
\times
(-p)^{-(n-2)\lfloor{ \frac{1}{2}  -\frac{j}{p-1}}\rfloor -\lfloor{ \frac{1}{4} -\frac{j}{p-1}}\rfloor -\lfloor{ \frac{3}{4}  -\frac{j}{p-1}}\rfloor - n \lfloor{\frac{j}{p-1}}\rfloor}.
\end{multline*}

\noindent Letting $j=\frac{p-1}{2} - k$ this becomes
\begin{multline*}
{_{n}G_{n}}
\biggl[ \begin{array}{cccccc} \frac{1}{2} &  \frac{1}{2} & \dotsc &  \frac{1}{2} & \frac{1}{4} & \frac{3}{4} \\
 1 & 1 & \dotsc & 1 & 1 & 1 \end{array}
\Big| \; s \; \biggr]_p\\
\shoveleft = \frac{-1}{p-1}  \sum_{k=1-\frac{p-1}{2}}^{\frac{p-1}{2}} 
(-1)^{n\left(\frac{p-1}{2}-k \right)}\;
\bar{\omega}^{\frac{p-1}{2}-k}(s)\\
\times
\frac{\biggfp{\langle \frac{k}{p-1}\rangle}^{n-2} \,\biggfp{\langle -\frac{1}{4} +\frac{k}{p-1}\rangle}\biggfp{\langle \frac{1}{4} +\frac{k}{p-1}\rangle}\biggfp{\langle\frac{1}{2} - \frac{k}{p-1}\rangle}^{n}}
{\biggfp{\frac{1}{2}}^{n-2}\biggfp{\frac{1}{4} }\biggfp{ \frac{3}{4}}}\\
\times
(-p)^{-(n-2)\lfloor{ \frac{k}{p-1}}\rfloor -\lfloor{ -\frac{1}{4} +\frac{k}{p-1}}\rfloor -\lfloor{ \frac{1}{4}  +\frac{k}{p-1}}\rfloor - n \lfloor{\frac{1}{2} - \frac{k}{p-1}}\rfloor}.
\end{multline*}

\noindent Note the summand is invariant under $k \rightarrow k+ (p-1)$ so we can take the limits of summation to be from $k=0$ to $k=p-2$. Therefore,
\begin{multline*}
{_{n}G_{n}}
\biggl[ \begin{array}{cccccc} \frac{1}{2} &  \frac{1}{2} & \dotsc &  \frac{1}{2} & \frac{1}{4} & \frac{3}{4} \\
 1 & 1 & \dotsc & 1 & 1 & 1 \end{array}
\Big| \; s \; \biggr]_p\\
\shoveleft = \frac{-1}{p-1}  \sum_{k=0}^{p-2} 
(-1)^{n\left(\frac{p-1}{2}-k \right)}\;
\bar{\omega}^{\frac{p-1}{2}-k}(s) \\
\times
\frac{\biggfp{\langle \frac{k}{p-1}\rangle}^{n-2} \,\biggfp{\langle -\frac{1}{4} +\frac{k}{p-1}\rangle}\biggfp{\langle \frac{1}{4} +\frac{k}{p-1}\rangle}\biggfp{\langle\frac{1}{2} - \frac{k}{p-1}\rangle}^{n}}
{\biggfp{\frac{1}{2}}^{n-2}\biggfp{\frac{1}{4} }\biggfp{ \frac{3}{4}}}\\
\times
(-p)^{-(n-2)\lfloor{ \frac{k}{p-1}}\rfloor -\lfloor{ -\frac{1}{4} +\frac{k}{p-1}}\rfloor -\lfloor{ \frac{1}{4}  +\frac{k}{p-1}}\rfloor - n \lfloor{\frac{1}{2} - \frac{k}{p-1}}\rfloor}\\
\shoveleft= \frac{-p \; \bar{\omega}^{\frac{p-1}{2}}((-1)^{n+1} s)}{p-1}  \sum_{k=0}^{p-2} 
(-1)^{nk}\;
 \bar{\omega}^{k}(s^{-1})\\
\times
\frac{\biggfp{\langle\frac{1}{2} - \frac{k}{p-1}\rangle}^{n} \, \biggfp{\langle \frac{k}{p-1}\rangle}^{n-2} \,\biggfp{\langle \frac{1}{4} +\frac{k}{p-1}\rangle}\biggfp{\langle \frac{3}{4} +\frac{k}{p-1}\rangle}}
{\biggfp{\frac{1}{2}}^{n}\,\biggfp{\frac{1}{4} }\biggfp{ \frac{3}{4}}}\\
\times
(-p)^{- n \lfloor{\frac{1}{2} - \frac{k}{p-1}}\rfloor -(n-2)\lfloor{ \frac{k}{p-1}}\rfloor  -\lfloor{ \frac{1}{4}  +\frac{k}{p-1}}\rfloor -\lfloor{ \frac{3}{4} +\frac{k}{p-1}}\rfloor }\\
=
 p \cdot \omega^{\frac{p-1}{2}}((-1)^{n+1}s) \cdot
{_{n}G_{n}}
\biggl[ \begin{array}{cccccc} \frac{1}{2} &  \frac{1}{2} & \dotsc &  \frac{1}{2} & \frac{1}{2} & \frac{1}{2} \\
 1 & 1 & \dotsc & 1 & \frac{1}{4} & \frac{3}{4} \end{array}
\Big| \; s^{-1} \; \biggr]_p,
\end{multline*}
using the facts that $\biggfp{\frac{1}{2}}^2 = - \omega^{\frac{p-1}{2}}(-1)$ and $(-1)^{\frac{p-1}{2}} = \phi(-1) = \omega^{\frac{p-1}{2}}(-1)$.
\end{proof}

We now prove Theorem \ref{thm_GMainId} by using the transformations in Theorems \ref{thm_3F2QuadTrans}-\ref{thm_GInversion}, together with Lemma \ref{Quad_Sum}.

\begin{proof}[Proof of Theorem \ref{thm_GMainId}]
Let $p$ be an odd prime. By letting $t \rightarrow t^{-1}$ in Theorem \ref{thm_GReduction}, we see that
\begin{equation*}
{_{4}G_{4}}
\biggl[ \begin{array}{cccc} \frac{1}{2} & \frac{1}{2} & \frac{1}{2} & \frac{1}{2} \\
 1 & 1 & 1 & 1 \end{array}
\Big| \; 1 \; \biggr]_p
=
-
\sum_{t=2}^{p-1}
{_{3}G_{3}}
\biggl[ \begin{array}{cccc} \frac{1}{2} & \frac{1}{2} & \frac{1}{2} \\
 1 & 1  & 1 \end{array}
\Big| \; t^{-1} \; \biggr]_p
\cdot \; \omega^{\frac{p-1}{2}}(t(t-1)).
\end{equation*}

\noindent Applying Theorem \ref{thm_3F2QuadTrans} to the right-hand side gives us
\begin{multline*}
{_{4}G_{4}}
\biggl[ \begin{array}{cccc} \frac{1}{2} & \frac{1}{2} & \frac{1}{2} & \frac{1}{2} \\
 1 & 1 & 1 & 1 \end{array}
\Big| \; 1 \; \biggr]_p\\
=
-p \cdot \omega^{\frac{p-1}{2}}(-2)
- s(p) \cdot \omega^{\frac{p-1}{2}}(-2) \cdot
\sum_{t=2}^{p-1}
{_{3}G_{3}}
\biggl[ \begin{array}{cccc} \frac{1}{4} & \frac{3}{4} & \frac{1}{2} \\
 1 & 1 & 1 \end{array}
\Big| \; -\frac{(1-t)^2}{4t}  \; \biggr]_p
\cdot \; \omega^{\frac{p-1}{2}}(t).
\end{multline*}

\noindent Now using Theorem \ref{thm_GInversion} and Lemma \ref {Quad_Sum} we see that
\begin{align*}
{_{4}G_{4}}&
\biggl[ \begin{array}{cccc} \frac{1}{2} & \frac{1}{2} & \frac{1}{4} & \frac{3}{4} \\
 1 & 1 & 1 & 1 \end{array}
\Big| \; 1 \; \biggr]_p\\
&=
p \cdot \omega^{\frac{p-1}{2}}(-1) \cdot
{_{4}G_{4}}
\biggl[ \begin{array}{cccc} \frac{1}{2} & \frac{1}{2} & \frac{1}{2} & \frac{1}{2} \\
 1 & 1 & \frac{1}{4} & \frac{3}{4} \end{array}
\Big| \; 1 \; \biggr]_p\\
&=
\frac{-p \cdot \omega^{\frac{p-1}{2}}(-1)}{p-1}  \sum_{j=0}^{p-2}
\frac
{\biggfp{\langle \frac{1}{2} -\frac{j}{p-1}\rangle}^4 \, \biggfp{ \frac{j}{p-1}}^2 \, \biggfp{\langle \frac{1}{4} +\frac{j}{p-1}\rangle}\biggfp{\langle \frac{3}{4} +\frac{j}{p-1}\rangle}}
{\biggfp{ \frac{1}{2} }^4 \, \biggfp{ \frac{1}{4} } \biggfp{ \frac{3}{4} }}\\
& \qquad \qquad \qquad \qquad\qquad \qquad \qquad \qquad \qquad \qquad \qquad  \times 
(-p)^{-4\lfloor{ \frac{1}{2} -\frac{j}{p-1}}\rfloor - \lfloor{ \frac{3}{4} +\frac{j}{p-1}}\rfloor - \lfloor{\frac{1}{4}  +\frac{j}{p-1}}\rfloor}
\\
&=
\frac{p \cdot \omega^{\frac{p-1}{2}}(-1)}{p-1} 
\left[
 \sum_{j=1}^{p-2} 
\frac
{\biggfp{\langle \frac{1}{2} -\frac{j}{p-1}\rangle}^3 \, \biggfp{ \frac{j}{p-1}} \, \biggfp{\langle \frac{1}{4} +\frac{j}{p-1}\rangle}\biggfp{\langle \frac{3}{4} +\frac{j}{p-1}\rangle}}
{\biggfp{ \frac{1}{2} }^3 \, \biggfp{ \frac{1}{4} } \biggfp{ \frac{3}{4} }}
\right.\\
& \left.  \quad \qquad \qquad \qquad \qquad \qquad  \times 
(-p)^{-3\lfloor{ \frac{1}{2} -\frac{j}{p-1}}\rfloor - \lfloor{ \frac{3}{4} +\frac{j}{p-1}}\rfloor - \lfloor{\frac{1}{4}  +\frac{j}{p-1}}\rfloor}
\sum_{t=2}^{p-1} \; \bar{\omega}^j\left(\frac{4(1-t)}{t^2}\right)
-1
\right]
\\
&=
-p \cdot \omega^{\frac{p-1}{2}}(-1) \cdot
\sum_{t=2}^{p-1}
{_{3}G_{3}}
\biggl[ \begin{array}{ccc} \frac{1}{2} & \frac{1}{2} & \frac{1}{2} \\
 1 & \frac{1}{4} & \frac{3}{4} \end{array}
\Big| \; -\frac{4(1-t)}{t^2} \; \biggr]_p
-p \cdot \omega^{\frac{p-1}{2}}(-1) \\
&=
-p \cdot \omega^{\frac{p-1}{2}}(-1) \cdot
\sum_{t=2}^{p-1}
{_{3}G_{3}}
\biggl[ \begin{array}{ccc} \frac{1}{2} & \frac{1}{2} & \frac{1}{2} \\
 1 & \frac{1}{4} & \frac{3}{4} \end{array}
\Big| \; -\frac{4t}{(1-t)^2} \; \biggr]_p
-p \cdot \omega^{\frac{p-1}{2}}(-1) \\
&=
-\sum_{t=2}^{p-1}
{_{3}G_{3}}
\biggl[ \begin{array}{ccc} \frac{1}{2} & \frac{1}{4} & \frac{3}{4} \\
 1 & 1 & 1 \end{array}
\Big| \; -\frac{(1-t)^2}{4t} \; \biggr]_p
\omega^{\frac{p-1}{2}}(t)
-p \cdot \omega^{\frac{p-1}{2}}(-1), 
\end{align*}
where, in the second last step, we let $t \rightarrow 1-t$.
\noindent We note that $\omega^{\frac{p-1}{2}}(2) = s(p)$. Therefore
\begin{align*}
{_{4}G_{4}} &
\biggl[ \begin{array}{cccc} \frac{1}{2} & \frac{1}{2} & \frac{1}{4} & \frac{3}{4} \\
 1 & 1 & 1 & 1 \end{array}
\Big| \; 1 \; \biggr]_p 
- s(p) \cdot p\\
&=
-\sum_{t=2}^{p-1}
{_{3}G_{3}}
\biggl[ \begin{array}{ccc} \frac{1}{2} & \frac{1}{4} & \frac{3}{4} \\
 1 & 1 & 1 \end{array}
\Big| \; -\frac{(1-t)^2}{4t} \; \biggr]_p
\omega^{\frac{p-1}{2}}(t)
-p \cdot \left(s(p) + \omega^{\frac{p-1}{2}}(-1) \right) \\
&=\omega^{\frac{p-1}{2}}(-1)
\left[ \,
{_{4}G_{4}}
\biggl[ \begin{array}{cccc} \frac{1}{2} & \frac{1}{2} & \frac{1}{2} & \frac{1}{2} \\
 1 & 1 & \dotsc & 1 \end{array}
\Big| \; 1 \; \biggr]_p
-p \,
\right].
\end{align*}

\end{proof}

We conclude this section by using Theorem \ref{thm_GMainId} to verify Theorem \ref{thm_GtoC}. Then, we apply Theorems \ref{thm_4G1} and  \ref{thm_GtoC} to prove the supercongruence in Theorem \ref{thm_RV3}.

\begin{proof}[Proof of Theorem \ref{thm_GtoC}]
Combining (\ref{thm_AO}) and Lemma \ref{lem_G_to_F} we know that, for all odd primes $p$,
\begin{equation*}
{_{4}G_{4}}
\biggl[ \begin{array}{cccc} \frac{1}{2} & \frac{1}{2} & \frac{1}{2} & \frac{1}{2} \\
 1 & 1 & 1 & 1 \end{array}
\Big| \; 1 \; \biggr]_p
-p=a(p).
\end{equation*}
Now, $f_2$ is a quadratic twist of $f_1$ with $c_p = \phi_p(-1) a_p$. To see this, we 
consider the quadratic twist  $f_{1,\psi} = \sum_{n=1}^\infty \psi(n)a(n)q^n$ of $f_1$ by the character $\psi(\cdot) = (\tfrac{-4}{\cdot})$, the Kronecker symbol.
A priori, $f_{1,\psi} \in S_4(\Gamma_0(32))$ by \cite[\S3 Prop.~17(b)]{Ko1}; however, the proof of \cite[\S3 Prop.~17(b)]{Ko1} reveals in fact that $f_{1,\psi} \in S_4(\Gamma_0(\textup{lcm}(8,4^2))=S_4(\Gamma_0(16))$. Using Sage Mathematics Software~\cite{Sage}, one easily computes the dimensions $\textup{Dim}(S_4(\Gamma_0(8)))=1$ and  $\textup{Dim}(S_4(\Gamma_0(16)))=3$. Therefore, $f_1(z)$ and $f_1(2z)$ form a basis for $S_4^{\textup{old}}(\Gamma_0(16))$ and so $f_{1,\psi} \in S_4^{\textup{new}}(\Gamma_0(16))$. 
Therefore $f_{1,\psi}$ is a multiple of $f_2$, the unique newform in $S_4(\Gamma_0(16))$, and by comparing coefficients we see that $f_{1,\psi}=f_2$. As a result,  $c_p = \psi(p) a_p = \phi_p(-1) a_p$.

Noting that $\phi_p(-1) = \omega^{(p-1)/2}(-1)$, it therefore suffices to prove
\begin{equation*}
{_{4}G_{4}}
\biggl[ \begin{array}{cccc} \frac{1}{2} & \frac{1}{2} & \frac{1}{4} & \frac{3}{4} \\
 1 & 1 & 1 & 1 \end{array}
\Big| \; 1 \; \biggr]_p
- s(p) \cdot p
= \omega^{(p-1)/2}(-1)
\left[
{_{4}G_{4}}
\biggl[ \begin{array}{cccc} \frac{1}{2} & \frac{1}{2} & \frac{1}{2} & \frac{1}{2} \\
 1 & 1 & 1 & 1 \end{array}
\Big| \; 1 \; \biggr]_p
-p
\right].
\end{equation*}
This has been proved in Theorem \ref{thm_GMainId}.
\end{proof}

\begin{proof}[Proof of Theorem \ref{thm_RV3}]
Appealing to Theorem \ref{thm_4G1} with $d_1=2$, $d_2=4$, it suffices to prove
\begin{equation*}
{_{4}G_{4}}
\biggl[ \begin{array}{cccc} \frac{1}{2} & \frac{1}{2} & \frac{1}{4} & \frac{3}{4} \\
 1 & 1 & 1 & 1 \end{array}
\Big| \; 1 \; \biggr]_p
- s(p) \cdot p
=c(p),
\end{equation*}
where $s(p)=\biggfp{\frac{1}{4}}\biggfp{\frac{3}{4}}\biggfp{\frac{1}{2}}^2$.
This has been proved in Theorem \ref{thm_GtoC}.
\end{proof}

%%%%%%%%%%%%%%%%%%%%%%%%%%%%%%%%%%%%%%%%%%%%%%%%%%
%%%%%%%%%%%%%%%%%%%%%%%%%%%%%%%%%%%%%%%%%%%%%%%%%%
%%%%%%%%%%%%%%%%%%%%%%%%%%%%%%%%%%%%%%%%%%%%%%%%%%
%%%%%%%%%%%%%%%%%%%%%%%%%%%%%%%%%%%%%%%%%%%%%%%%%%

\section{Concluding Remarks}\label{sec_cr}
The proof of the supercongruence in this paper relies on the fact that the modular form in question is a twist of another modular form, which appears in one of the other known supercongruences. This allows us to link the proof of the two supercongruences via an identity for  ${_{n}G_{n}}[\cdots]$. Unfortunately, these are the only two modular forms in Rodriguez-Villegas' conjectures which are linked in this way. Thus, it does not appear that discovering other identities for ${_{n}G_{n}}[\cdots]$, which are similar to Theorem \ref{thm_GMainId}, will reduce the burden of proof for establishing the remaining 11 conjectures.

%%%%%%%%%%%%%%%%%%%%%%%%%%%%%%%%%%%%%%%%%%%%%%%%%%
%%%%%%%%%%%%%%%%%%%%%%%%%%%%%%%%%%%%%%%%%%%%%%%%%%
%%%%%%%%%%%%%%%%%%%%%%%%%%%%%%%%%%%%%%%%%%%%%%%%%%
%%%%%%%%%%%%%%%%%%%%%%%%%%%%%%%%%%%%%%%%%%%%%%%%%%

\vspace{12pt}


\begin{thebibliography}{999}

%\bibitem{A2} S. Ahlgren, \emph{The points of a certain fivefold over finite fields and the twelfth power of the eta function}, Finite Fields Appl. \textbf{8} (2002), 18--33.

\bibitem{AO} S. Ahlgren, K. Ono, \emph{A Gaussian hypergeometric series evaluation and Ap{\'e}ry number congruences}, J. Reine Angew. Math. \textbf{518} (2000), 187--212.

\bibitem{AAR} G. Andrews, R. Askey, R. Roy, \emph{Special functions}, Encyclopedia of Mathematics and its Applications, \textbf{71}, Cambridge University Press, Cambridge, 1999.

\bibitem{BEW} B. Berndt, R. Evans, K. Williams, \emph{Gauss and Jacobi Sums}, Canadian Mathematical Society Series of Monographs and Advanced Texts, A Wiley-Interscience Publication, John Wiley \& Sons, Inc., New York, 1998.

\bibitem{BS1} R. Barman, N Saikia, \emph{$p$-Adic gamma function and the trace of Frobenius of elliptic curves}, J Number Theory, \textbf{140} (2014), no. 7, 181--195. 

\bibitem{BS2} R. Barman, N Saikia, \emph{Certain transformations for hypergeometric series in the $p$-adic setting}, arXiv:1403.3607.  

\bibitem{E} R. Evans, \emph{Hypergeometric $_3F_2(1/4)$ evaluations over finite fields and Hecke eigenforms}, Proc. Amer. Math. Soc. \textbf{138} (2010), 517--531.

 \bibitem{F} J. Fuselier, \emph{Hypergeometric functions over finite fields and relations to modular forms and elliptic curves}, Ph.D. thesis, Texas A$\&$M University, 2007. 

\bibitem{F2} J. Fuselier, \emph{Hypergeometric functions over $\mathbb{F}_p$ and relations to elliptic curves and modular forms}, Proc. Amer. Math. Soc. \textbf{138} (2010), no.1, 109--123.

\bibitem{F3} J. Fuselier, \emph{Traces of Hecke operators in level 1 and Gaussian hypergeometric functions.}, Proc. Amer. Math. Soc. \textbf{141} (2013), no.6, 1871--1881.

\bibitem{FOP} S. Frechette, K. Ono, and M. Papanikolas, \emph{Gaussian hypergeometric functions and traces of Hecke operators}, Int. Math. Res. Not. \textbf{2004}, no. 60, 3233--3262.

\bibitem{G} J. Greene, \emph{Hypergeometric functions over finite fields}, Trans. Amer. Math. Soc. {\bf 301} (1987), no. 1, 77--101.

\bibitem{GK} B. Gross, N. Koblitz, \emph{Gauss sums and the p-adic $\Gamma$-function}, Ann. Math. \textbf{109} (1979), no. 3, 569--581.

\bibitem{K} N. M. Katz, \emph{Exponential Sums and Differential Equations}, Princeton Univ. Press, Princeton, 1990.

\bibitem{Ki} T. Kilbourn, \emph{An extension of the Ap{\'e}ry number supercongruence}, Acta Arith. \textbf{123} (2006), no. 4, 335--348.

\bibitem{Ko} N. Koblitz, \emph{$p$-adic analysis: a short course on recent work}, London Math. Soc. Lecture Note Series, \textbf{46}. Cambridge University Press, Cambridge-New York, 1980.

\bibitem{Ko1} N. Koblitz, \emph{Introduction to Elliptic Curves and Modular Forms}, 2nd ed., Springer-Verlag, New York, 1993.

\bibitem{L} C. Lennon, \emph{Arithmetic and analytic properties of finite field hypergeometric functions}, Ph. D. thesis, Massachusetts Institute of Technology, 2011.

\bibitem{L2} C. Lennon, \emph{Gaussian hypergeometric evaluations of traces of Frobenius for elliptic curves}, Proc. Amer. Math. Soc. \textbf{139} (2011), no. 6, 1931--1938. 

\bibitem{L3} C. Lennon, \emph{Trace formulas for Hecke operators, Gaussian hypergeometric functions, and the modularity of a threefold.}, J. Number Theory \textbf{131} (2011), no. 12, 2320--2351. 

\bibitem{M} E. Mortenson, \emph{Supercongruences for truncated ${}_{n+1}F_{n}$ hypergeometric series with applications to certain weight three newforms}, Proc. Amer. Math. Soc. \textbf{133} (2005), no. 2, 321--330.

\bibitem{McC4} D. McCarthy, \emph{Extending Gaussian hypergeometric series to the $p$-adic setting}, Int. J. Number Theory \textbf{8} (2012), no. 7, 1581--1612.

\bibitem{McC5} D. McCarthy, \emph{On a supercongruence conjecture of Rodriguez-Villegas}, Proc. Amer. Math. Soc. \textbf{140} (2012), 2241--2254. 

\bibitem{McC6} D. McCarthy, \emph{Transformations of well-poised hypergeometric functions over finite fields}, Finite Fields and Their Applications, \textbf{18} (2012), no. 6, 1133--1147.

\bibitem{McC8} D. McCarthy, \emph{The trace of Frobenius of elliptic curves and the p-adic gamma function}, Pacific Journal of Mathematics, \textbf{261} (2013), no. 1, 219--236.

\bibitem{McC7} D. McCarthy, M. Papanikolas, \emph{A finite field hypergeometric function associated to eigenvalues of a Siegel eigenform}, arXiv:1205.1006.

\bibitem{R} F. Rodriguez-Villegas, \emph{Hypergeometric families of Calabi-Yau manifolds}, Calabi-Yau varieties and mirror symmetry (Toronto, Ontario, 2001), Fields Inst. Commun. \textbf{38}, Amer. Math. Soc., 2003, 223--231.

\bibitem{Sage}
W. A. Stein et~al., \emph{Sage Mathematics Software (Version
  4.8)}, The Sage Development Team, 2012, http://www.sagemath.org.

\bibitem{V} V. Vega, \emph{Relations between hypergeometric functions over finite fields and algebraic curves}, Int. J. Number Theory \textbf{7} (2011), no. 8, 2171--2195.



\end{thebibliography}
\end{document}